\documentclass[11pt]{amsart}

\usepackage[utf8]{inputenc}
\usepackage[T1]{fontenc}
\usepackage{lmodern}
\usepackage{float}
\usepackage[a4paper, total={6in, 8in}]{geometry}
\usepackage{stmaryrd, amsthm, mathtools, amssymb, dsfont, mathrsfs, bm, amsfonts, amsmath}
\usepackage{accents}
\usepackage{listings}
\usepackage{algorithm}
\usepackage{enumitem}
\usepackage{fullpage}
\usepackage{color}
\usepackage{todonotes}
\usepackage{booktabs}
\usepackage{caption}
\usepackage{subcaption}
\usepackage{hyperref}
\usepackage[noabbrev,capitalise]{cleveref}

\usepackage{xcolor}
\usepackage{graphicx}
\usepackage{setspace}
\usepackage{parskip}
\setlength{\parindent}{15pt}


\newtheorem{thm}{Theorem}
\newtheorem{prop}[thm]{Proposition}
\newtheorem{lemma}[thm]{Lemma}
\newtheorem{obs}[thm]{Observation}
\newtheorem{cor}[thm]{Corollary}
\newtheorem{claim}{Claim}[thm]
\newtheorem{question}{Question}
\newtheorem{conj}{Conjecture}
\theoremstyle{definition}

\crefname{defi}{Definition}{Definitions}
\theoremstyle{remark}

\newtheorem*{thm*}{Theorem}
\newtheorem*{lemma*}{Lemma}
\newtheorem*{cor*}{Corollary}
\newtheorem*{prop*}{Proposition}
\newtheorem*{defi*}{Definition}
\newtheorem*{rk*}{Remark}
\newtheorem*{conj*}{Conjecture}
\newtheorem*{fact*}{Fact}

\usepackage{pgfplots}
\pgfplotsset{compat=1.15}
\usepackage{mathrsfs}
\usetikzlibrary{arrows}

\usepackage[normalem]{ulem} 




\newcommand{\bb}[1]{\mathbb{#1}}
\newcommand{\mc}[1]{\mathcal{#1}}

\newcommand{\equa}[1]{\begin{equation}\nonumber #1\end{equation}}

\newcommand{\pth}[1]{\left( #1 \right)}
\newcommand{\floor}[1]{\left\lfloor #1 \right\rfloor}
\newcommand{\ceil}[1]{\left\lceil #1 \right\rceil}

\newcommand{\set}[1]{\left\{#1\right\}}

\newcommand{\card}[1]{\left| #1 \right|}

\newcommand{\NN}{\bb{N}}

\newcommand\restr[2]{{#1}_{|#2}}

\newcommand{\pr}[1]{\bb{P}\left[#1\right]}
\newcommand{\esp}[1]{\bb{E}\left[#1\right]}

\newcommand{\bigO}[2][]{\mc{O}_{#1}\!\pth{#2}}

\newcommand{\bigTheta}[2][]{\Theta_{#1}\!\pth{#2}}
\newcommand{\bigOmega}[2][]{\Omega_{#1}\!\pth{#2}}

\newcommand{\softOmega}[1]{\widetilde{\Omega}\pth{#1}}
\newcommand{\softO}[1]{\widetilde{\mc{O}}\!\pth{#1}}
\newcommand{\softTheta}[1]{\widetilde{\Theta}\pth{#1}}

\newcommand{\cA}{\mc{A}}

\newcommand{\cF}{\mc{F}}
\newcommand{\cG}{\mc{G}}


\newcommand{\F}{\mathscr{F}}

\newcommand{\eps}{\varepsilon}

\newcommand{\cycles}{\Omega}
\newcommand{\cycle}{\Pi}

\newcommand{\twice}{\Upsilon}
\newcommand{\twiceA}{\Lambda}
\newcommand{\twiceB}{\Phi}
\newcommand{\minusA}{\minuso}
\newcommand{\minusB}{\boxminus}
\newcommand{\boldc}{\mathbf{c}}

\newcommand{\boldL}{\mathbf{L}}
\newcommand{\boldX}{\mathbf{X}}


\DeclareMathOperator*{\argmin}{arg\,min}


\usepackage{version}

\newenvironment{subproof}[1][Proof of the Claim]{%
  \begin{proof}[#1]%
}{%
  \end{proof}%
}

\newcommand{\degmaxDeux}[2]{\Delta_{#1}(#2)}
\newcommand{\degmax}[1]{\degmaxDeux{2\ell}{#1}}
\newcommand{\bX}{\mathbf{X}}
\DeclareMathOperator{\polylog}{polylog}
\DeclareMathOperator{\ad}{ad}
\DeclareMathOperator{\ex}{ex}
\newcommand{\coDelta}{\Delta^{(2)}}
\newcommand{\cocoDelta}{\Delta^{(3)}}
\newcommand{\hammock}[1]{\langle #1 \rangle}
\newcommand{\pt}{3\pth{\frac{\log n}{n}}^{1/4}t^{-1/2}}

\title{Acyclic colourings of graphs with obstructions}

\author{Quentin Chuet}
\address{\'Equipe GALaC, LISN (Université Paris-Saclay),
Gif sur Yvette, France.}
\email{quentin.chuet@lisn.fr}

\author{Johanne Cohen}
\address{\'Equipe GALaC, LISN (Université Paris-Saclay),
Gif sur Yvette, France.}
\email{johanne.cohen@lisn.fr}

\author{François Pirot}
\address{\'Equipe GALaC, LISN (Université Paris-Saclay),
Gif sur Yvette, France.}
\email{francois.pirot@lisn.fr}


\begin{document}

\begin{abstract}
Given a graph $G$, a colouring of $G$  is \emph{acyclic} if it is a proper colouring of $G$ and every cycle contains at least three colours.
Its acyclic chromatic number $\chi_a(G)$ is the minimum~$k$ such that an acyclic $k$-colouring of $G$ exists.
When $G$ has maximum degree $\Delta$, it is known that $\chi_a(G) = \bigO{\Delta^{4/3}}$ as $\Delta \to \infty$, and that $\chi_a(G) = \bigO{\sqrt{t} \cdot \Delta}$ if in addition $G$ does not contain $K_{2,t}$ as a subgraph.
We study the extremal value of the acyclic chromatic number in the class of graphs of maximum degree $\Delta$ that do not contain some fixed subgraph $F$ on $t$ vertices.
We establish that this extremal value is at most $\bigO{t^{8/3}\Delta^{2/3}}$ if $F$ is a tree, $\bigO{\sqrt{t} \cdot \Delta}$ if $F$ is bipartite and can be made acyclic with the removal of one vertex, $2\Delta + \bigO{t\Delta^{2/3}}$ if $F$ is an even cycle of length at least $6$, and $\bigO{t^{1/4}\Delta^{5/4}}$ if $F=K_{3,t}$. 
Moreover, we exhibit an infinite family of obstructions $F$ that each induces a different asymptotic behaviour for this extremal value.
This is obtained with the derivation of lower bounds that come from the analysis of the acyclic chromatic number of a random graph drawn from either $G(n,p)$ or $G(n,n,p)$, that we entirely determine up to a $\polylog(n)$ factor.
As a byproduct, we can certify that most of our results are tight up to a $\Delta^{\mathcal{O}(1/t)}$ factor.
\end{abstract}

\maketitle

\section{Introduction}
Given a graph $G$ and an integer $k\ge 1$, an \emph{acyclic $k$-colouring} $\phi$ of $G$ is a proper $k$-colouring (a partition of $V(G)$ into $k$ independent sets which we call the \emph{colour classes} of $\phi$) such that each pair of colour classes induces a forest (or equivalently, every cycle contains at least three colours). The minimum integer $k$ such that an acyclic $k$-colouring of $G$ exists is the \emph{acyclic chromatic number of $G$}, which we denote $\chi_a(G)$.

Grunbaum~\cite{grunbaum1973acyclic} introduced the concept of acyclic colourings in 1973, and a few years later Kostochka~\cite{kostochka1978upper} proved that deciding whether $G$ has
an acyclic $3$-colouring is NP-complete.
Following this, many efforts have been made to either compute exactly the acyclic chromatic number for certain graphs~\cite{borodin1979acyclic} or to find good estimates for its value.

Fertin \emph{et al.}~\cite{fertin2003acyclic} have established a simple lower bound on the acyclic chromatic number: if $G$ has average degree $d$, then $\chi_a(G) > d/2 + 1$. 
Alon \emph{et al.}~\cite{AMR91} showed that there are graphs $G$ with maximum degree $\Delta$ for which $\chi_a(G)= \bigOmega{\frac{\Delta^{4/3}}{(\log \Delta)^{1/3}}}$ as $\Delta\to\infty$. 
This follows from an analysis of a random graph drawn from the Erdős–Rényi model $G(n,p)$ with $p\coloneqq \Theta((\log n/n)^{1/4})$.
 
Upper bounds on $\chi_a(G)$ in terms of the maximum degree $\Delta(G)$ have been studied for a few decades.
It has been established in 1991~\cite{AMR91} that, for every graph $G$ of maximum degree $\Delta$, one has $\chi_a(G) = \bigO{\Delta^{4/3}}$. Since then, there has been a couple of improvements on this bound~\cite{GMP20,ndreca2012improved,SeVo13}. To our knowledge, the state of the art is now as follows:  for every graph $G$ of maximum degree $\Delta$, one has $\chi_a(G) \le \frac{3}{2}\Delta^{4/3} + \bigO{\Delta}$. This bound was obtained with the so-called entropy compression method, developed following the algorithmic proof of the Lovász Local Lemma (LLL) due to Moser and Tardos~\cite{MoTa10}. It is tight up to a polylogarithmic factor, as certified by the lower bound described in~\cite{AMR91}. 
If moreover $G$ is a line-graph, then $\chi_a(G) = \bigTheta{\Delta}$~\cite{EsPa13}, and more generally for every $K_{2,t}$-free graph $G$ of maximum degree $\Delta$, $\chi_a(G) = \bigO{\sqrt{t} \cdot \Delta}$~\cite{AMR91,GMP20}.
We also note that the bounds in~\cite{GMP20} are constructive; in particular, there is a random algorithm that returns in expected polynomial time an acyclic $k$-colouring of any $d$-regular $K_{2,t}$-free graph $G$, where $k$ is within a factor $\bigO{\sqrt{t}}$ of the optimal.

Motivated by the above observation, we have sought obstructions that lead to a constant approximation algorithm for the acyclic colouring problem over $d$-regular graphs. We show that even cycles of length at most $d^{1/3}$ are such obstructions.
More generally, our work studies the impact of forbidding one or several fixed subgraphs $F$ in a graph $G$ on the value of $\chi_a(G)$. Given a fixed set of graphs $\F$, a graph G is called \emph{$\F$-free} if no graph $F\in \F$ appears as a (not necessarily induced) subgraph of $G$. When $\F=\{F\}$, we say that $G$ is $F$-free.
We let $a(d,\F) \coloneqq \max \{\chi_a(G) : G \mbox{ is $\F$-free and $\Delta(G)\le d$} \}$ be the extremal value of the acyclic chromatic number over $\F$-free graphs of maximum degree at most $d$. When $\F=\{F\}$ is a single-graph obstruction, we may use the notation $a(d,F)$.

This parallels an extensive study of the extremal value of the \emph{chromatic number} over $F$-free graphs of maximum degree at most $d$, which we denote $c(d,F)$. For a fixed obstruction $F$, up to a potential polyloglog factor, there are only two distinct asymptotic behaviours for $c(d,F)$, depending on whether $F$ is acyclic or not. If $F$ is a forest, then $c(d,F) = \bigO{1}$ since any $F$-free graph has bounded degeneracy (see Lemma~\ref{lem:tree-min-degree}) and therefore bounded chromatic number.
Otherwise (if $F$ contains at least one cycle), one has $\Omega(d/\log d) \le c(d,F) \le \bigO{d\log \log d/\log d}$. 
The lower bound follows from the fact that there are $d$-regular graphs of arbitrarily large girth and chromatic number at least $\frac{d}{2\ln d}$~\cite{bollobas1981independence}. The general upper bound reduces to the case $F=K_t$ for some fixed integer $t\ge 3$. It was first proven by Johansson~\cite{johansson1996choice}, and has been refined since then~\cite{molloy2019list,DJKS20+}.
The correct order of magnitude for $c(d,F)$ when $F$ is not acyclic is conjectured in~\cite{AKS99} to be $\Theta(d/\log d)$. They showed that the conjecture holds in the particular case that $F$ can be made bipartite by removing one vertex.

In contrast, our results indicate that $a(d,F)$ exhibits a more complex range of behaviours. We identify an infinite number of different regimes for $a(d,F)$; see Table~\ref{tab:bounds} for a detailed list of bounds on $a(d,F)$ with their references.

\begin{enumerate}[label=(\roman*)]
    \item If $F$ is the subdivision of a tree (or a subgraph thereof), then $a(d,F) = \bigO{1}$, cf.~\cite{choi2019characterization}.
    \item If $F$ is any other tree, then $\bigOmega{d^{1/2}} \le a(d,F) \le \bigO{d^{2/3}}$.
    \item If $F$ is bipartite and can be made acyclic by removing one vertex, then $a(d,F) = \Theta(d)$.
    \item If $F=K_{3,t}$, then there is $\eps_t \underset{t\to\infty}{\to} 0$ such that $\Omega\pth{d^{5/4-\eps_t}} \le a(d,K_{3,t}) \le \bigO{d^{5/4}}$.
    \item If $F=K_{4,t}$, then there is $\eps_t \underset{t\to\infty}{\to} 0$ such that $\Omega\pth{d^{4/3-\eps_t}} \le a(d,K_{4,t}) \le \bigO{d^{4/3}}$.
    \item For every rational of the form $\alpha = 1 + \frac{s-1}{2sk}$ with $s,k \ge 2$, and every $\eps > 0$, there exists a graph $F$ such that $\bigOmega{d^{\alpha - \eps}} \le a(d,F) \le \bigO{d^\alpha}$.
    \item If $F$ is not bipartite, or if $F$ has average degree at least $8$, then $ a(d,F) = \bigOmega{\frac{d^{4/3}}{(\log d)^{1/3}}}$.
\end{enumerate}

\subsection{Notations and terminology}
\label{sec:notations}
All graphs considered in this paper are finite and simple. 
Given a graph $G$, we respectively denote $\delta(G), \ad(G)$, and $\Delta(G)$ the minimum, average, and maximum degrees of $G$. Given two distinct vertices $u,v\in V(G)$, we let $N(u,v) \coloneqq N(u)\cap N(v)$ be their \emph{coneighbourhood}, and $\deg(u,v)\coloneqq |N(u,v)|$ be their \emph{codegree}. Given two disjoint subsets $X,Y\subseteq V(G)$, we denote $G[X,Y]$ the bipartite subgraph of $G$ with vertex-set $X\cup Y$ and edge-set $E(G) \cap (X\times Y)$.

Given an integer $n\ge 1$, $P_n$ and $C_n$ respectively denote the path and cycle graphs on $n$ vertices. For a connected bipartite graph $H$ of parts $A$ and $B$, we define the \emph{hammock} of $H$, denoted $\hammock{H}$, as a copy of $H$ with two additional vertices $a$ and $b$ (which we call \emph{anchor vertices}) such that $N(a) = A$ and $N(b) = B$. For $k\ge 2$, the \emph{$k$-subdivision} of a graph $H$, denoted $H^{1/k}$, is obtained by replacing each edge of $H$ with a path of length $k$. The \emph{subdivision} of $H$ is $H^{1/2}$. A \emph{subdivided tree} is a tree obtained by taking the subgraph of a tree subdivision; in other words, it is a tree with an even distance between every pair of vertices of degree at least $3$.

We denote by $[n]$ the set of integers $\{0, \ldots, n-1\}$. Given a set of elements $S$, and a multiset $\F$ of subsets of $S$, we denote by $\Delta_q(\F)$ the maximum over all $x\in S$ of the number of members of $\F$ of cardinality $q$ that contain $x$. For example,
given a set of cycles $\Pi$ in a graph $G$, $\degmax{\Pi}$ corresponds to the maximum number of cycles of length $2\ell$ in $\Pi$ that intersect in a common vertex $v \in V(G)$ --- here we abuse the notation by referring directly to a cycle $C$ instead of its vertex-set $V(C)$.

In order to ignore polylogarithmic terms in our bounds, we shall often use the \emph{soft-$\mc{O}$} notation: for every real-valued functions $f$ and $g$, we write $f(x) = \softO{g(x)}$ if $f(x) = \bigO{g(x) \log(x)^\alpha}$, for some constant $\alpha \in \mathbb{R}$. The \emph{soft-$\Omega$} notation is defined analogously, and we write $f(x) = \softTheta{g(x)}$ if $f(x) = \softO{g(x)}$ and $f(x) = \softOmega{g(x)}$. For example, $x^2/\log(x) = \softTheta{x^2}$.

When we consider two variables $x$ and $t$ simultaneously, we may write $f(x) = \bigO[t]{g(x)}$ to indicate that $f(x) = \bigO{h(t) \cdot g(x)}$ for some absolute function $h$ which only depends on $t$, and the notations $\Omega_t$ and $\Theta_t$ are defined in the same manner. For example, $2^t x^2 = \bigTheta[t]{x^2}$

Throughout this paper, when we say that a property holds \emph{with high probability} (w.h.p.) for a random graph $G$ on $n$ vertices, we mean with probability tending to $1$ as $n\to\infty$.  

\subsection{Organisation of the document}  
Our work mainly consists in finding bounds for $a(d,\F)$, with a specific focus on the case where $\F=\{F\}$ is a single-graph obstruction. We summarise these bounds in Table~\ref{tab:bounds}.

\renewcommand{\arraystretch}{1.5}
\begin{table}[!ht]
    \centering
    \begin{tabular}{l c c l}
    \toprule
    $\F$ & Upper bound & Lower bound & Reference\\
    \midrule 
    $\emptyset$ & $\frac{3}{2} d^{4/3} + \bigO{d} $ & $\Omega\pth{\frac{d^{4/3}}{(\log d)^{1/3}}}$ &~\cite{GMP20,AMR91}\\
    $\{K_{2,t}\}$ & $\bigO{\sqrt{t} \cdot d}$  & $d/2$ &~\cite{AMR91}, Prop.~\ref{prop:linearlowerbound:cycle}\\
    $\{K_{3,t}\}$ & $\bigO{t^{1/4} d^{5/4}} $ & $\Omega\pth{d^{5/4-\eps_t}}$ & Cor.~\ref{cor:2acyclic}, Cor.~\ref{cor:K_3t-free}\\
    $\{K_{4,t}\}$ & & $\Omega\pth{d^{4/3-\eps_t}}$ & Cor.~\ref{cor:K_4t-free}\\
    $\{\,\hammock{K_{s,t}^{1/k}}\,\}$ & $\bigO[s,t,k]{d^{1 + \frac{s-1}{2sk}}}$ &  $\bigOmega{d^{1 + \frac{s-1}{2sk} - \eps_t}}$ & Cor.~\ref{cor:subdivision-upperbound}, Cor.~\ref{cor:subdivision-lowerbound} \\
    $\{T\}\colon$  subdivided tree & constant $C_T$ & &\cite{choi2019characterization} \\
    $\{C_4,T\}$ where $T\colon$\,tree, size $t$ & $\bigO{\sqrt{t^5d}}$ & $\sqrt{d/2}$ & Cor.~\ref{cor:C4-F-free}, Prop.~\ref{prop:not-subdivided}  \\
    $\{T\}$ where $T\colon$\,tree, size $t$ & $\bigO{t^{8/3}d^{2/3}}$ & $\sqrt{d/2}$ & Cor.~\ref{cor:forest}, Prop.~\ref{prop:not-subdivided}  \\
    $\{C_4\}$ & $2.7627\,d$  & $d/2$ & Th.~\ref{thm:c4free}, Prop.~\ref{prop:linearlowerbound:cycle}   \\
    $\{C_{2t}\}$ for some $3\le t \le d^{1/3}$ & $2d + \bigO{t d^{2/3}}$ & $d/2$ & Th.~\ref{thm:C2t}, Prop.~\ref{prop:linearlowerbound:cycle}  \\
    $\{C_3,C_4,C_5,C_6\}$ & $1.7633\,d + \bigO{\sqrt{d}}$  & $d/2$ & Th.~\ref{thm:girth7}, Prop.~\ref{prop:linearlowerbound:cycle} \\  
    \bottomrule
    \end{tabular}
    \caption{Our bounds on $a(d,\F)$ with various obstructions. The $\bigO{\cdot}$ and $\Omega(\cdot)$ terms are considered as $d\to \infty$. For $K_{3,t}$, $K_{4,t}$ and $\hammock{K_{s,t}^{1/k}}$, we have $\varepsilon_t \rightarrow 0$ as $t\rightarrow \infty$. For $\hammock{K_{s,t}^{1/k}}$ specifically, we assume $s,k \ge 2$ and $t\ge s$.}
    \label{tab:bounds}
\end{table}

In Section~\ref{sec:framework}, we present a technical theorem (Theorem~\ref{thm:main}) that certifies the existence of \mbox{$K$-colourings} with specific constraints. Applications of
Theorem~\ref{thm:main} require that we derive upper bounds on the number of cycles with specific properties in $\F$-free graphs. In Section~\ref{sec:general}, we provide a general estimate based on the \emph{extremal number $\ex(n,F)$}, from which we derive that
\begin{equation} 
\label{eq:turan}
a(d,\hammock{F}) = \bigO{\sqrt{d \cdot \ex(d,F)}}
\end{equation}
for every connected bipartite graph $F$, where we recall that $\hammock{F}$ is the hammock of $F$ as introduced in Subsection~\ref{sec:notations}. This is general enough to retrieve the bound $a(d,K_{2,t}) = \bigO{\sqrt{t} \cdot d}$ mentioned earlier, and to derive many other similar ones.

In Section~\ref{sec:lowerbounds}, we first observe that we may restrict ourselves to the analysis of connected obstructions.
We then evaluate the tightness of the upper bounds derived in Section~\ref{sec:general} through a thorough analysis of $\chi_a(G)$ when $G$ is a random bipartite graph drawn from $G(n,n,p)$ --- that is a random subgraph of $K_{n,n}$ where each edge is kept independently with probability $p$ --- for $p$ that ranges roughly from $n^{-1/2}$ to $n^{-1/4}$. We determine a lower bound that holds with high probability on $\chi_a(G)$, which we later show to be tight up to a $\polylog(n)$-factor. Using this lower bound, we show that the upper bound on $a(d,F)$ that we can derive from \eqref{eq:turan} for many specific graphs $F$ on $t$ vertices is tight up to a $\Delta^{\bigO{1/t}}$-factor.
We observe that if $F$ is not bipartite then $a(d,F) = \softTheta{d^{4/3}}$, and if $F$ is not acyclic then the lower bound of Fertin \emph{et al.}~\cite{fertin2003acyclic} implies that $a(d,F)>d/2$ (cf. Proposition~\ref{prop:linearlowerbound:cycle}).

Following that observation, we focus in Section~\ref{sec:forests} on the case where $\F$ contains a tree. By a result of Choi \emph{et al.}~\cite{choi2019characterization}, $a(d,F)$ is bounded by a constant (as $d\rightarrow \infty$) if and only if $F$ is a subdivided tree. We show that if $F$ is any other tree, then $a(d,\{C_4,F\}) = \bigTheta[F]{\sqrt{d}}$ and $\sqrt{d/2} \le a(d,F) \le \bigO[F]{d^{2/3}}$.

We finish in Section~\ref{sec:cycle} by focusing on cycle obstructions. When $F$ is an even cycle, we determine $a(d,F)$ up to an absolute multiplicative constant, and we tighten this as much as possible for the specific obstruction $\F=\{C_3,C_4,C_5,C_6\}$ (i.e. we analyse the extremal value of $\chi_a$ over the class of graphs of girth $7$ and bounded maximum degree).

We conclude in Section~\ref{sec:conclusion} by tying all our bounds together to obtain a good estimation (up to a $\polylog(n)$-factor) of $\chi_a(G)$ for a random graph $G$ drawn from $G(n,p)$ or $G(n,n,p)$, for every value of $p=p(n)\in (0,1)$ (cf. Theorem~\ref{thm:randomgraph-generalbounds}). We end by proposing a few open problems.

\section{The general framework} \label{sec:framework}

The proof of our main theorem is an abstract generalisation of that of~\cite[Theorem~2]{GMP20}, where we have replaced the compression entropy machinery with an inductive counting approach, sometimes referred to as \emph{Rosenfeld counting} following its first use in the context of graph colouring in~\cite{rosenfeld2020another}. 
We also note that it is a special case of~\cite[Theorem~3]{WaWo22}, although we have decided to include the relatively short proof for completeness and to avoid relying on an abstract framework on hypergraphs.
As a byproduct, we obtain exponential lower bounds for the number of colourings satisfying the hypothesis of the theorem.
We note, however, that it would be possible to do the same using entropy compression and obtain a random algorithm that returns such a colouring in expected polynomial time.

We begin by introducing a few notions needed to formulate the theorem, which requires a high level of generality so that we can use it in the variety of applications appearing in the forthcoming sections.

Given a graph $G$, a set of \emph{constraints} $\Gamma$ consists of a set of pairs of vertices of $G$. For a given vertex $v\in V(G)$, we let $N_\Gamma(v)$ consist of all vertices that form a pair in $\Gamma$ together with $v$. The \emph{constraint-degree of $v$} is $\deg_\Gamma(v) \coloneqq |N_\Gamma(v)|$, and the \emph{maximum degree of $\Gamma$} is $\Delta(\Gamma) \coloneqq \max_{v\in V(G)} \deg_\Gamma(v)$.
Given an integer $k\ge 1$, a \emph{$\Gamma$-proper $k$-colouring of $G$} is a mapping $\phi\colon V(G) \to [k]$ such that $\phi(u) \neq \phi(v)$ for every $\{u,v\} \in \Gamma$. For instance, an $E(G)$-proper $k$-colouring of $G$ coincides with the usual notion of proper $k$-colouring. More generally, letting $H$ be the graph on vertex-set $V(G)$ and edge-set $\Gamma$, a $\Gamma$-proper $k$-colouring of $G$ is a proper $k$-colouring of $H$.

Given an even cycle $C=(v_0,\ldots,v_{2\ell-1})$ in $G$ and a $k$-colouring $\phi$ of $G$ (which may be improper), we say that $C$ is \emph{bicoloured} in $\phi$ if $\phi(v_i)=\phi(v_{i+2 \bmod 2\ell})$ for every $i\in [2\ell]$.
We say that $C$ is \emph{$\Gamma$-free} if $\{v_i,v_{i+2 \bmod 2\ell}\} \notin \Gamma$ for every $i\in [2\ell]$. 
Given a set $\Pi$ of cycles in $G$, we say that $\phi$ is \emph{$\Pi$-acyclic} if no cycle in $\Pi$ is bicoloured in $\phi$.
We make the following observation.

\begin{prop}
\label{prop:gamma-free-cycles}
Let $G$ be a graph, let $\Gamma$ be a set of constraints on $G$, and let $\Pi_0 \supseteq \Pi_1$ be two sets of cycles of $G$. 
If $\Pi_1$ is the set of $\Gamma$-free cycles of $\Pi_0$, then any $\Gamma$-proper $\Pi_1$-acyclic colouring of $G$ is also a $\Gamma$-proper $\Pi_0$-acyclic colouring of $G$.
\end{prop}

\begin{proof}
    Let $\phi$ be a $\Gamma$-proper $\Pi_1$-acyclic colouring of $G$. Assume for the sake of contradiction that some cycle $C=(v_0, \ldots, v_{2\ell-1}) \in \Pi_0$ is bicoloured in $\phi$. Then $C \notin \Pi_1$, so $C$ is not $\Gamma$-free. Hence there exists $i$ such that $\{v_i, v_{i+2 \bmod 2\ell}\} \in \Gamma$. Since $C$ is bicoloured, we have $\phi(v_i)=\phi(v_{i+2\bmod 2\ell})$; this contradicts the fact that $\phi$ is $\Gamma$-proper.
\end{proof}

We are now ready to state our main technical theorem.

\begin{thm}
\label{thm:main}
Let $G$ be a graph, $\Gamma$ a set of constraints, and $\Pi$ a set of cycles of $G$.
Fix $\tau \ge 1$, and
\[ K \coloneqq \Delta(\Gamma) + \tau + \sum_{\ell\ge 2}\limits \frac{\degmax{\Pi}}{\tau^{2\ell-3}}.\]
Then there exist at least $\tau^{|V(G)|}$ $\Gamma$-proper $\Pi$-acyclic $\ceil{K}$-colourings of $G$.
\end{thm}

\begin{proof}
For every subgraph $H \subseteq G$ and every family of subsets of vertices $X$ of $G$, we denote $X[H] \coloneqq X\cap 2^{V(H)}$. We let $\cA(H)$ be the set of $\Gamma[H]$-proper $\Pi[H]$-acyclic $\ceil{K}$-colourings of $H$. By convention, if $H$ is the empty graph, $\cA(H)$ contains a single trivial colouring. We show with a strong induction that, for every subgraph $H \subseteq G$,
\begin{equation}
\label{eq:IH}
\tag{IH\ref*{thm:main}}
 \forall v_0 \in V(H), \quad \card{\cA(H)}\ge \tau \card{\cA(H-v_0)}.
\end{equation}
\noindent
If $V(H)$ is empty, \eqref{eq:IH} is trivially true. Suppose $V(H) \neq \emptyset$ and let $v_0 \in V(H)$. By induction, assume \eqref{eq:IH} is true for every strict subgraph $H' \subset H$. We define $\cF$, the set of \emph{flawed extensions}, as the subset of $\ceil{K}$-colourings of $H$ such that every colouring $\phi \in \cF$ is $\Gamma[H-v_0]$-proper $\Pi[H-v_0]$-acyclic, but contains a conflict due to the colour given to $v_0$. In symbols, $\phi\in \cF$ if and only if $\restr{\phi}{H - v_0} \in \cA(H-v_0)$ and $\phi \notin \cA(H)$.
We distinguish two types of conflict.
\begin{enumerate}[label=(\alph*)]
	\item There is a vertex $u \in V(H)$ such that $\{u,v_0\}\in \Gamma$ and $\phi(u) = \phi(v_0)$.
	\item $v_0$ is contained in a bicoloured cycle of $\Pi$.
\end{enumerate}
\noindent
We let $\cF_a$ and $\cF_b$ be the subsets of colourings that respectively contain a conflict of type (a) or (b). These two subsets are not necessarily disjoint.

By definition of $\cF$, we have have $\card{\cA(H)} = K\card{\cA(H-v_0)} - \card{\cF}$, and $\card{\cF} \le \card{\cF_a} + \card{\cF_b}$.

\noindent
To complete the proof of the induction, we claim that 
\begin{enumerate}[label=(\roman*)]
	\item \label{it:(i)}  $\displaystyle \; \card{\cF_a} \le \Delta(\Gamma) \card{\cA(H-v_0)}$;
	\item \label{it:(ii)}  $\displaystyle \; \card{\cF_b} \le \card{\cA(H-v_0)} \sum_{\ell\ge 2} \frac{\degmax{\Pi}}{\tau^{2\ell-3}} $.
\end{enumerate}

\begin{subproof}[Proof of \ref{it:(i)}]
Let $c \in \cA(H-v_0)$. By definition, $v_0$ belongs to at most $\Delta(\Gamma)$ constraints in $\Gamma$, hence there are at most $\Delta(\Gamma)$ flawed extensions of $c$ that induce a conflict of type $(a)$. Therefore, we have $\card{\cF_a} \le \Delta(\Gamma) \card{\cA(H-v_0)}$.
\end{subproof}

\begin{subproof}[Proof of \ref{it:(ii)}]

For $\ell \ge 2$, let $\cycles_{2\ell}$ be the set of cycles in $\Pi$ of length $2\ell$ which contain $v_0$ (we define it only for even cycles since we never care about odd cycles). By assumption, $|\cycles_{2\ell}|\le \degmax{\Pi}$. 
For a cycle $C \in \cycles_{2\ell}$, we let $\cF_C \subseteq \cF_b$ be the subset of colourings for which $C$ is effectively bicoloured. Clearly, we have $\cF_b =\bigcup_{\ell \ge 2} \bigcup_{C \in \cycles_{2\ell}} \cF_C$.

Let $C = (v_0,\ldots,v_{2\ell-1}) \in \cycles_{2\ell}$. We construct an injection from $\cF_C$ to $\cA(H \setminus \{v_0,\ldots,v_{2\ell-3}\})$. Let $\phi \colon \cF_C \to \cA(H \setminus \{v_0,\ldots,v_{2\ell-3}\})$ be the application that simply uncolours $v_0,\ldots,v_{2\ell-3}$ (uncolouring $v_0$ ensures that all conflicts are resolved). The inverse application $\phi^{-1}$ colours $v_0,\ldots,v_{2\ell-3}$ by alternating the colours of $v_{2\ell-1}$ and $v_{2\ell-2}$, which by definition is the only way to ensure that $C$ is bicoloured. Therefore, $\card{\cF_C} \le \card{\cA(H \setminus \{v_0,\ldots,v_{2\ell-3}\})}$. 

\noindent
We apply the induction hypothesis \eqref{eq:IH} iteratively on the vertices $v_{2\ell-3},\ldots,v_1$ in that order, and obtain that
\begin{align*}
\card{\cF_C} \le \card{\cA(H \setminus \{v_0,\ldots,v_{2\ell-3}\})} \le \frac{1}{\tau}\card{\cA(H \setminus \{v_0,\ldots,v_{2\ell-4}\})} \le \cdots \le  \frac{1}{\tau^{2\ell - 3}}\card{\cA(H-v_0)}.
\end{align*}
\noindent
Finally, since  $\degmax{\Pi}$ corresponds to the maximum number of cycles $C\in \Pi$ of length $2\ell$ that contain a fixed vertex $v_0 \in V(G)$,  we conclude that
\equa{
\begin{split}
\card{\cF_b} &\le \sum_{\ell \ge 2}\sum_{C \in \cycles_{2\ell}} \card{\cF_C} \le \sum_{\ell \ge 2}\sum_{C \in \cycles_{2\ell}} \frac{1}{\tau^{2\ell-3}}\card{\cA(H-v_0)}
\le \sum_{\ell \ge 2} \frac{\degmax{\Pi}}{\tau^{2\ell-3}}\card{\cA(H-v_0)}.
\qedhere
\end{split}
}

\end{subproof}
\noindent
A straightforward application of \ref{it:(i)} and \ref{it:(ii)}completes the proof of the induction:
\begin{align*}
\card{\cA(H)} &\ge K\card{\cA(H-v_0)} - \card{\cF_a} - \card{\cF_b} \\
&\ge \pth{K - \Delta(\Gamma) - \sum_{\ell\ge 2} \frac{\degmax{\Pi}}{\tau^{2\ell-3}}}\card{\cA(H-v_0)}\\
&\ge \tau \card{\cA(H-v_0)}.
\end{align*}
\noindent
An iterative application of \eqref{eq:IH} to every $v\in V(G)$ implies that $\card{\cA(G)} \ge \tau^{\card{V(G)}} > 1$.
\end{proof}

\section{A general upper bound}\label{sec:general}
In this section, we present a general application of Theorem~\ref{thm:main} that leads to non-trivial upper bounds on $a(d,F)$ for a large family of bipartite graphs $F$. For an integer $n$ and a graph $F$, the \emph{extremal number of $F$}, denoted $\ex(n,F)$, is the maximum number of edges in a $F$-free graph with $n$ vertices (this is also sometimes called the \emph{Tur\'an number} or \emph{Tur\'an function}). For a survey on this topic, see~\cite{furedi2013history}.

We recall that for a connected bipartite graph $F$ of parts $A$ and $B$, we define the \emph{hammock} of $F$, denoted $\hammock{F}$, as a copy of $F$ with two additional vertices $a$ and $b$ (called \emph{anchor vertices}) such that $N(a)=A$ and $N(b) = B$. The next lemma concerns the number of cycles in a $\hammock{F}$-free graph. A similar (albeit weaker) lemma was used by Bernshteyn~\cite{bernshteyn2016new} to bound the acyclic chromatic index of graphs with bipartite obstructions. 

\begin{lemma}\label{lem:turan-cycles}
    Let $F$ be a connected bipartite graph. Given any $\hammock{F}$-free graph $G$ of maximum degree $\Delta$, and for every $\ell \ge 2$, the number of cycles of length $2\ell$ that intersect in some common vertex $u\in V(G)$ is
   $$ \degmax{\Omega_G} \le \Delta^{2\ell - 3}\ex(2\Delta, F).$$
\end{lemma}
\begin{proof}
Let $u \in V(G)$. There are at most $\Delta^{2\ell-3}$ paths of length $2\ell-3$ starting from $u$. Let $v$ be the other endpoint of such a path, and let us denote $p_3(u,v)$ the number of paths of length $3$ from $u$ to $v$; we claim that 
\begin{equation}
\label{eq:claim}
    p_3(u,v) \le 2\ex(2\Delta,F).
\end{equation}
By concatenating a path of length $2\ell-3$ starting at $u$ and ending in some vertex $v$, and a path of length $3$ from $u$ to $v$, we obtain a cycle of length $2\ell$ containing $u$. Moreover, given any such cycle, there are two ways to obtain it in this way, depending on the direction in which we traverse it. Therefore the number of cycles of length $2\ell$ going through $u$ is at most $\frac{1}{2} \Delta^{2\ell-3} \cdot 2\ex(2\Delta, F)$, as desired.
We now prove the claim.

Let $X \coloneqq N(u) \cap N(v)$, $A \coloneqq N(u) \setminus X$, and $B\coloneqq N(v) \setminus X$. From each edge $xy \in G[X]$ one can construct exactly two paths of length $3$ from $u$ to $v$, namely $u-x-y-v$ and $u-y-x-v$. From each edge in $G[A,B]$, $G[A,X]$, and $G[X,B]$, one can construct one path of length $3$ from $u$ to $v$. This covers all possible ways of constructing a path of length $3$ from $u$ to $v$, thus
\begin{equation}\label{eq:p3}
    p_3(u,v) = 2|E(G[X])| + |E(G[A,B])| + |E(G[A,X])| + |E(G[X,B])|.
\end{equation}

We observe that $G[X]$, $G[A,B]$, $G[A,X]$, and $G[X,B]$ do not contain a copy of $F$, otherwise $u$ and $v$ could act as anchor vertices to retrieve $\hammock{F}$ as a subgraph of $G$ --- while this is clear in the case of $G[X]$, for the three other cases we highly rely on the fact that $F$ is connected, and therefore its bipartition is unique and must agree with the one of the bipartite graph we are considering. Let $x \coloneqq |X|$, and observe that $|A|,|B| \le \Delta-x$. We infer that
\begin{equation}
\label{eq:turan-hammock}
    p_3(u,v) \le 2\ex(x,F) + \ex(2(\Delta-x), F) + \ex(\Delta-x+x,F) + \ex(x + \Delta-x,F).
\end{equation}

The extremal number is superadditive: we have $\ex(n,F) + \ex(m,F) \le \ex(n+m,F)$ for every $n,m \ge 0$. To see this, let $H_n$ (resp. $H_m$) be an $F$-free graph with $n$ (resp. $m$) vertices and $\ex(n,F)$ (resp. $\ex(m,F)$) edges. Then the disjoint union $H_n \cup H_m$ is a $F$-free graph with $n+m$ vertices and $\ex(n,F) + \ex(m,F)$ edges. Using this observation, we infer that
$$p_3(u,v) \le \ex(2x,F) + \ex(2\Delta - 2x,F) + 2\ex(\Delta,F) \le 2\ex(2\Delta,F),$$
which proves our claim.
\end{proof}

\begin{thm}\label{thm:upperbound-general}
    Let $F$ be a connected bipartite graph with at least $4$ vertices. Then for $d \ge 3$,
    $$a(d,\hammock{F}) \le \ceil{2\sqrt{d \cdot \ex(2d,F)} + 3d}.$$
\end{thm}

\begin{proof}
Let $G$ be an $\hammock{F}$-free graph of maximum degree $\Delta$. Let $\lambda \coloneqq \ex(2\Delta, F)$. We prove Theorem~\ref{thm:upperbound-general} through an application of Theorem~\ref{thm:main} with $\Gamma=E(G)$ and $\Pi=\Omega_G$. We fix $\tau \coloneqq \sqrt{\Delta \lambda}$, so that there exist $\tau^{\card{V(G)}}$ proper acyclic $\ceil{K}$-colourings of $G$, with
\begin{align*}
    K &= \Delta + \tau + \sum_{\ell\ge 2} \frac{\degmax{\Omega_G}}{\tau^{2\ell-3}} \\
    & \le \Delta + \sqrt{\Delta \lambda} + \sum_{\ell\ge 2} \frac{\Delta^{2\ell - 3}\,\lambda}{\pth{\Delta \lambda}^{\ell-3/2}}& \mbox{by Lemma~\ref{lem:turan-cycles};}\\
    & = \Delta + 2\sqrt{\Delta \lambda} + \sum_{\ell\ge 3} \Delta^{\ell  - 3/2}\,\lambda^{5/2 -\ell}.
\end{align*}
As $F$ is a connected graph with at least $4$ vertices and $\Delta\ge 3$,  $\lambda = \ex(2\Delta, F) \ge 2\Delta-1 \ge \frac{5}{3}\Delta$. This is certified by the graph consisting of $\floor{\frac{2\Delta}{3}}$ disjoint triangles and a disjoint $K_{2\Delta \bmod 3}$. As a result, for $\ell \ge 3$, the term $\Delta^{\ell  - 3/2}\,\lambda^{5/2 -\ell}$ is bounded from above by $\Delta \cdot \pth{5/3}^{5/2 - \ell}$; therefore the sum in the above equality is bounded from above by the geometric series $$\sum_{\ell\ge 3} \Delta \pth{\frac{5}{3}}^{5/2 - \ell} = \frac{\sqrt{15}}{2}\, \Delta < 2\Delta.$$
It follows from the previous computations that $K \le 2\sqrt{\Delta \lambda} \, + 3\Delta$.
\end{proof}

\subsection{Corollaries for single-graph obstructions}\label{sec:corollaries}

We shall make use of the following simple observation to relate $a(d,F)$ to $a(d,\hammock{H})$ when $F \subseteq \hammock{H}$.

\begin{obs}\label{obs:upperbound-subgraph}
    Suppose $F \subseteq F'$. Then for all $d\ge 0$, $a(d,F) \leq a(d,F')$.
\end{obs}

In order to use Theorem~\ref{thm:upperbound-general}, we need upper bounds on the extremal number $\ex(n,F)$ for various connected bipartite graphs $F$. If $\ex(n,F) = \bigO[F]{n}$, then we obtain a linear upper bound on $a(d,\hammock{F})$, whereas if $\ex(n,F) = \Omega(n^{5/3})$, then we do not improve upon the general upper bound $a(d,\hammock{F}) = \mathcal{O}(d^{4/3})$.

Observe that if $F$ is a connected graph with at least $t \ge 3$ vertices and $n$ is a multiple of $t-1$, then $\ex(n,F) \ge \frac{t-2}{2}n$, as certified by the graph consisting of $n/(t-1)$ disjoint copies of $K_{t-1}$. Erd\H os and Gallai~\cite{ErGa59} proved that this lower bound is tight for $P_t$, the path on $t$ vertices.

\begin{thm}[Erd\H os, Gallai, 1959]
\label{thm:turan-paths}
Let $t\ge 2$. Then $\ex(n,P_t) \le \frac{t-2}{2}n$.
\end{thm}

Theorem~\ref{thm:turan-paths} has been conjectured to hold more generally when $P_t$ is replaced with any tree on $t$ vertices. 

\begin{conj}[Erd\H os, S\' os, 1963]
    Let $T$ be a tree on $t\ge 2$ vertices. Then $\ex(n,T) \le \frac{t-2}{2}n$.
\end{conj}

Ajtai, Komlós, Simonovits, and Szemerédi have reportedly proved the Erd\H os-S\' os Conjecture when $t$ is large enough. Their work is still in preparation for publication. For our purposes, we shall use a well-known $2$-approximation of the Erd\H os-S\' os Conjecture. We include a simple proof relying on two classical lemmas, which we will reuse later.

\begin{lemma}[Folklore] \label{lem:average-min-degree}
Every multigraph of average degree $d>0$ contains a sub-multigraph of minimum degree at least $\floor{d/2} + 1$.
\end{lemma}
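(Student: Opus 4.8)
The plan is to run the standard extremal argument. Observe first that $\floor{d/2}+1$ is precisely the smallest integer exceeding $d/2$: indeed $\floor{d/2}+1>d/2$ since $\floor{x}>x-1$, and conversely any integer $\delta>d/2$ satisfies $\delta>\floor{d/2}$, hence $\delta\ge\floor{d/2}+1$. So it suffices to produce a non-empty sub-multigraph $H$ of $G$ with $\delta(H)>d/2$. We may assume $G$ has at least one edge, as otherwise $d=0$ and there is nothing to prove.

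I would then set up the right optimisation: among all sub-multigraphs $H\subseteq G$ having at least one vertex and satisfying $e(H)\ge\tfrac{d}{2}\,v(H)$, choose one with $v(H)$ minimum. Such an $H$ exists because $G$ itself qualifies — its average degree being $d$ means $2e(G)=d\,v(G)$, i.e.\ $e(G)=\tfrac d2 v(G)$ — and there are only finitely many candidates. The heart of the proof is a one-vertex exchange: if some $u\in V(H)$ had $\deg_H(u)\le d/2$, then $H-u$ would have $v(H)-1$ vertices and $e(H)-\deg_H(u)\ge\tfrac d2 v(H)-\tfrac d2=\tfrac d2\bigl(v(H)-1\bigr)$ edges, so it would still meet the density condition. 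If $v(H)\ge 2$ this is a strictly smaller admissible sub-multigraph, contradicting minimality; if $v(H)=1$ then $2e(H)=\deg_H(u)\le d/2$ forces $e(H)\le d/4$, contradicting $e(H)\ge d/2>0$. Hence $\delta(H)>d/2$, which by the observation above means $\delta(H)\ge\floor{d/2}+1$.

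There is no genuine obstacle here — the lemma is folklore — so the only things to be careful about are bookkeeping: the equivalence between the thresholds ``$>d/2$'' and ``$\ge\floor{d/2}+1$'', and restricting the extremal choice to sub-multigraphs on at least one vertex so that $\delta(H)$ is well defined (this is exactly why the case $v(H)=1$ has to be dispatched on its own rather than by deleting $u$). An equivalent route I could take instead is the greedy one: iteratively delete a vertex whose current degree is at most $\floor{d/2}$; since each such deletion destroys at most $\floor{d/2}\le d/2$ edges while removing one vertex, this process cannot delete all of $G$ — doing so would account for at most $\tfrac d2 v(G)=e(G)$ edge-deletions with the last vertex contributing none, a contradiction when $d>0$ — and whatever survives has minimum degree at least $\floor{d/2}+1$.
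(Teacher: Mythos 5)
The paper states this lemma as folklore and gives no proof of its own, so there is nothing to compare your argument against. Your extremal proof is the standard one and is correct: you take a vertex-minimal non-empty sub-multigraph $H$ with $e(H)\ge \tfrac{d}{2}v(H)$ (which exists, since $G$ itself satisfies $e(G)=\tfrac{d}{2}v(G)$), show that deleting any vertex of degree $\le d/2$ would preserve the density condition on a strictly smaller sub-multigraph, and handle the $v(H)=1$ base case separately — all correctly done. The reduction of the floor-threshold ``$\ge\lfloor d/2\rfloor+1$'' to the strict-inequality threshold ``$>d/2$'' for integer degrees is also accurately justified, and the greedy deletion variant you sketch at the end is an equally valid route. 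One small stylistic remark: in the $v(H)=1$ case you write $2e(H)=\deg_H(u)$, which tacitly assumes loops are permitted and contribute $2$ to the degree; in the loopless convention $e(H)=0$ there, which contradicts $e(H)\ge d/2>0$ even more directly — either way the case is dispatched, so this is cosmetic rather than a gap.
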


\begin{lemma}
\label{lem:tree-min-degree}
Let $T$ be any fixed rooted tree on $t$ vertices. If a graph $G$ has minimum degree $t-1$, then a copy of $T$ is rooted in each vertex $v\in V(G)$ in $G$.
\end{lemma}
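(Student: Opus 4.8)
The plan is to prove \cref{lem:tree-min-degree} by induction on the number of vertices $t$ of the rooted tree $T$, embedding $T$ greedily one vertex at a time while exploiting the minimum-degree hypothesis to always find an unused neighbour. First I would set up the induction: the base case $t=1$ is trivial, since the single-vertex tree is rooted at $v$ with no edges to place. For the inductive step, I would take a rooted tree $T$ on $t\ge 2$ vertices with root $r$, pick a leaf $\ell$ of $T$ that is at maximum distance from $r$ (so that $\ell$'s parent $p$ has all its other children being leaves, though in fact this refinement is not strictly needed), and let $T' = T - \ell$, a rooted tree on $t-1$ vertices with the same root.

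The key step is the extension. Given any $v\in V(G)$, by the induction hypothesis there is a copy of $T'$ in $G$ rooted at $v$; write $\varphi\colon V(T')\to V(G)$ for the corresponding injective embedding. Let $p$ be the parent of $\ell$ in $T$; we must find a vertex $w\in V(G)$ adjacent to $\varphi(p)$ with $w\notin \varphi(V(T'))$, and then set $\varphi(\ell)=w$ to obtain the desired copy of $T$ rooted at $v$. Such a $w$ exists because $\varphi(p)$ has at least $t-1$ neighbours in $G$ by the minimum-degree hypothesis, while $\varphi(V(T'))$ has exactly $t-1$ vertices, so at most $t-1$ of those neighbours can already be used — but one of the used neighbours of $\varphi(p)$ is $\varphi$ of the parent of $p$ in $T'$ (if $p\neq r$) or is irrelevant if $p=r$, and in any case the number of neighbours of $\varphi(p)$ lying inside $\varphi(V(T')\setminus\{p\})$ is at most $t-2$. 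Hence at least one neighbour of $\varphi(p)$ lies outside the image, and we may take it as $\varphi(\ell)$.

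The one subtlety to get right — and the main thing to be careful about rather than a genuine obstacle — is the counting at the extension step: we need $\deg_G(\varphi(p)) \ge t-1$ to strictly exceed $|\varphi(V(T'))\setminus\{p\}| = t-2$, which is exactly why the hypothesis is ``minimum degree $t-1$'' and not $t-2$. I would phrase this cleanly by noting $\varphi(V(T'))$ has $t-1$ elements, $\varphi(p)$ is one of them and is not its own neighbour, so at most $t-2$ neighbours of $\varphi(p)$ are occupied, leaving a free neighbour to host $\ell$. This completes the induction and hence the proof.
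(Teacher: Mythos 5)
Your proof is correct and uses essentially the same greedy embedding argument as the paper: the paper builds the tree iteratively along a DFS ordering, whereas you cast it as induction on $t$ by deleting a leaf, but in both cases the key step is identical — the parent of the newly placed vertex has at most $t-2$ occupied neighbours against a degree of at least $t-1$, so a free neighbour always exists.
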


\begin{proof}
The proof is standard; we include it for completeness.
We construct a copy of $T$ rooted in $v$ greedily, by following a depth-first search ordering $v_1, \ldots, v_t$ 
of $V(T)$. 
We first fix $T_1 \coloneqq \{v\}$, then for every $2\le i\le t$ we construct $T_i$ by adding $v_i$ to  the tree $T_{i-1}$ already constructed, so that $T_i$ is isomorphic to $T[\{v_1, \ldots, v_i\}]$. To do so, we need to find a neighbour of the parent node $v_j$ of $v_i$ in $T$ (by assumption, we have $j<i$) that does not belong to $V(T_{i-1})$.
The existence is due to the fact that $v_j$ has at most $i-2 < t-1$ neighbours in $T_{i-1}$ and at least $t-1$ neighbours in $G$. At the end of that process, we obtain a tree $T_t$ rooted in $v$ and isomorphic to $T$.
\end{proof}

\begin{cor}\label{cor:turan-tree}
    Let $T$ be a tree with $t \ge 3$ vertices. Then $\ex(n,T) < (t-2)n$.
\end{cor}
\begin{proof}
    Let $G$ be a $T$-free graph with $n$ vertices. Suppose by contradiction that $G$ has at least $(t-2)n$ edges, i.e. $G$ has average degree at least $2(t-2)$. By Lemma~\ref{lem:average-min-degree}, $G$ contains a subgraph of minimum degree at least $t-1$. By Lemma~\ref{lem:tree-min-degree}, $H$ contains a copy of $T$, a contradiction.
\end{proof}

From Corollary~\ref{cor:turan-tree}, one can immediately deduce a linear upper bound on $a(d,\hammock{T})$ assuming $T$ is a tree, by a direct application of Theorem~\ref{thm:upperbound-general}.

\begin{cor}\label{cor:hammock-tree}
    Let $T$ be a tree with $t$ vertices. Then
    $$a(d,\hammock{T}) = \bigO{\sqrt{t} \cdot d}.$$
\end{cor}

While hammocks of trees can be considered   an \emph{ad-hoc} family of graphs, its closure under the subgraph operation contains several interesting usual graph classes. We now state a weaker corollary which concerns bipartite obstructions that become acyclic after removing one vertex.

\begin{cor}\label{cor:1acyclic}
    Let $F$ be a connected bipartite graph with $t$ vertices such that $F-v$ is acyclic for some $v\in V(F)$. Then
    $$a(d,F) = \bigO{\sqrt{t} \cdot d}.$$
\end{cor}
\begin{proof}
    Consider the components of $F-v$, and select one edge per component which connects it to $v$. Define $T$ by taking a copy of $F$ and removing all edges incident to $v$ which were not selected: one easily verifies that $T$ is a spanning tree of $F$ and that $F \subseteq \hammock{T}$. By Observation~\ref{obs:upperbound-subgraph} and Corollary~\ref{cor:hammock-tree}, we conclude that $a(d,F) \le a(d,\hammock{T}) = \bigO{\sqrt{t} \cdot d}$.
\end{proof}

Among other obstructions, Corollary~\ref{cor:1acyclic} notably concerns the even cycle $C_{2t}$, which will be treated separately in Section~\ref{sec:cycle}, and the complete bipartite graph $K_{2,t}$, which was already studied in~\cite{AMR91} and~\cite{GMP20}. We now investigate what happens when the obstruction is a larger complete bipartite subgraph. To that end, we will rely on the celebrated K\H{o}v\'{a}ri-S\'{o}s-Tur\'{a}n Theorem~\cite{kHovari1954problem} which provides an 
upper bound on $\ex(n,K_{s,t})$.

\begin{thm}[K\H{o}v\'{a}ri, S\'{o}s, Tur\'{a}n, 1954]
    Let $s \le t$ be positive integers. Then
    $$\ex(n,K_{s,t}) \le \frac{1}{2} \sqrt[s]{t-1} \cdot  n^{2-1/s} + \frac{s-1}{2}n.$$
\end{thm}

\begin{cor}\label{cor:2acyclic}
Let $t\ge 3$. Then
    $$a(d,K_{3,t}) = \bigO{t^{1/4} \cdot d^{5/4}}.$$
\end{cor}
\begin{proof}
    Observe that $K_{3,t} \subseteq \hammock{K_{2,t}}$. By the K\H{o}v\'{a}ri-S\'{o}s-Tur\'{a}n Theorem, we obtain $\ex(n,K_{2,t}) \le \frac{1}{2} \sqrt{t-1} \cdot n^{3/2} + \frac{n}{2}$. Applying Theorem~\ref{thm:upperbound-general} together with Observation~\ref{obs:upperbound-subgraph} yields the desired upper bound.
\end{proof}

We observe that there is no hope to derive an upper bound better than the general $\bigO{d^{4/3}}$ one for $a(d,F)$ using a similar application of Theorem~\ref{thm:upperbound-general} when $F$ is a larger complete bipartite graph. Indeed, even when $F\coloneqq K^-_{4,4}$ is $K_{4,4}$ minus an edge, one has $F = \hammock{K_{3,3}}$, and it has been shown in~\cite{Fur96} that $\ex(n,K_{3,3}) \underset{n\to\infty}{\sim} \frac{1}{2} n^{5/3}$.

To cover a larger family of obstructions, we may rely on a generalisation of the K\H{o}v\'{a}ri-S\'{o}s-Tur\'{a}n Theorem to $k$-subdivisions of complete bipartite graphs due to Janzer~\cite{janzer2020extremal}.

\begin{thm}[Janzer, 2020]\label{thm:turan-subdivision}
    Let $s$ and $k$ be fixed positive integers, and $t\ge s$. Then $$\ex(n,K_{s,t}^{1/k}) = \bigO[s,t,k]{n^{1+ \frac{s-1}{sk}}}.$$
\end{thm}

\begin{cor}\label{cor:subdivision-upperbound}
    Let $s$ and $k$ be fixed positive integers, and $t \ge s$. Then
    $$a(d,\hammock{K_{s,t}^{1/k}}) = \bigO[s,t,k]{d^{1 + \frac{s-1}{2sk}}}.$$
\end{cor}

In the next section, we shall derive lower bounds, which indicate that Corollaries~\ref{cor:hammock-tree},~\ref{cor:2acyclic} and~\ref{cor:subdivision-upperbound} cannot be significantly improved.

\section{Lower bounds and related observations}
\label{sec:lowerbounds}
Let us first argue that not all obstructions are worth considering if we seek non-trivial bounds on $a(d,F)$. The following theorem states that if a graph $F$ has several connected components, only one of them sensibly affects the value of $a(d,F)$. Therefore, we may (and will) restrict ourselves to analysing connected obstructions $F$.

\begin{thm}
Suppose $F$ has connected components  $(F_i)_{i \in [r]}$. Then, for every $d \ge 0$, 
 \[\max_{i \in [r]}\; a(d,F_i) \; \le \; a(d,F) \; \le \; \max_{i \in [r]}\; a(d,F_i) + |V(F)|.\]
\end{thm}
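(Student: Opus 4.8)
The plan is to prove the two inequalities separately. The lower bound $\max_{i} a(d,F_i) \le a(d,F)$ is the easy direction: any graph $G$ that is $F_i$-free for the single component $F_i$ is automatically $F$-free, since a copy of $F$ in $G$ would contain a copy of $F_i$. Hence the maximum of $\chi_a$ over $F_i$-free graphs of maximum degree $d$ is taken over a subclass of the $F$-free graphs, giving $a(d,F_i) \le a(d,F)$ for each $i$, and thus the stated lower bound. (One minor point to check: the convention in the definition of $a(d,\F)$ — whether $d$ bounds the maximum degree or equals it — but under either reading the inclusion of graph classes goes through.)

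For the upper bound, I would take an arbitrary $F$-free graph $G$ of maximum degree $d$ and show $\chi_a(G) \le \max_i a(d,F_i) + c_F$ for a suitable constant $c_F$ depending only on $F$. The key structural observation is: if $G$ simultaneously contained, as \emph{vertex-disjoint} subgraphs, copies of every component $F_i$, then gluing them would produce a copy of $F$ in $G$, contradicting $F$-freeness. So $G$ cannot contain pairwise disjoint copies of all the $F_i$; in particular, there is some index $i^\star$ such that $G$ contains no copy of $F_{i^\star}$ that is disjoint from a fixed bounded-size set of vertices. The cleaner way to package this is via a bounded "hitting set": I claim there is a set $W \subseteq V(G)$ with $|W| \le c_F$ (a constant depending only on $|V(F)|$ and the number of components $r$) such that $G - W$ is $F_{i}$-free for at least one $i$. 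Indeed, greedily extract pairwise vertex-disjoint copies of whichever components one can still find; this process must get stuck before producing disjoint copies of all $r$ components, and the union of the extracted copies has size at most $(r-1)|V(F)| =: c_F$, and removing it kills all further copies of the component that could not be found — so $G - W$ is $F_j$-free for that component $F_j$.

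Given such a $W$, colour $G - W$ with an acyclic colouring using at most $a(d,F_j) \le \max_i a(d,F_i)$ colours (valid since $G - W$ has maximum degree at most $d$ and is $F_j$-free), and then give each vertex of $W$ its own fresh private colour, using $|W| \le c_F$ new colours. The resulting colouring of $G$ is proper, since within $G-W$ it is proper and the $W$-vertices have unique colours. It is acyclic: a bichromatic cycle would use exactly two colours; if both colours appear in $G - W$ it would be a bichromatic cycle of $G - W$, impossible; and if one of the two colours is a private colour of some $w \in W$, then $w$ is the only vertex of that colour, so it cannot lie on a cycle monochromatic-plus-one — any cycle through $w$ uses at least the colour of $w$ once and, since $w$ has degree $\ge 2$ on the cycle with both neighbours differently coloured from... more carefully, a cycle using a private colour only once cannot be $2$-coloured unless it has length $2$, which is impossible in a simple graph. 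Hence $\chi_a(G) \le \max_i a(d,F_i) + c_F$, completing the proof.

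The main obstacle is getting the hitting-set / greedy-extraction argument precisely right, i.e. verifying that $O(r|V(F)|)$ vertices genuinely suffice to destroy all copies of some component, uniformly over all $F$-free $G$ and all $d$. The subtlety is that "$G$ has no $r$ pairwise disjoint copies, one of each component" does not immediately give a single bounded hitting set for a single component; one must argue that the greedy process of pulling out disjoint copies of distinct components terminates, and that termination pins down the offending component. I would be careful to phrase the greedy procedure so that at each step we add a copy of a component not yet represented among the already-chosen disjoint copies, stopping when no new component can be added disjointly; at that point some component $F_j$ has the property that every copy of it meets the chosen set, which is exactly what is needed. The constant $c_F$ can then be taken to be $(r-1)\,|V(F)|$.
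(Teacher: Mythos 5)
Your proof is correct and takes essentially the same route as the paper: both lower bounds are identical, and your greedy extraction of pairwise disjoint copies of distinct components until one component cannot be found is precisely what the paper does, just phrased there as an induction on $r$ (peel off a copy of $F_{r-1}$ if present, else done). One small remark: your accounting of the hitting set is looser than needed. Since you extract at most one copy of each component and at least one component is never extracted, $|W| \le \sum_{i\in S}|V(F_i)| \le |V(F)|-1$, so your own argument already yields $c_F = |V(F)|$ (matching the paper) rather than $(r-1)|V(F)|$; the latter is still a constant, so the theorem is unaffected.
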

\begin{proof}
First, we prove the lower bound.
Let $d \ge 0$. Let $j \in [r]$ be such that $a(d,F_j) = \max\limits_{i \in [r]}\; a(d,F_i)$. Let $G$ be an $F_j$-free graph with $\Delta(G) \le d$ such that $\chi_a(G) = a(d,F_j)$. Since $G$ must also be $F$-free, we have $\chi_a(G) \le a(d,F)$.

Now, we focus on the upper bound.
We proceed by induction on $r$, the number of connected components of $F$.\\
The base case $r=1$ is trivial. Suppose now that $r \ge 2$. Let $d \ge 0$. Let $G$ be an $F$-free graph with $\Delta(G) \le d$ such that $\chi_a(G) = a(d, F)$. Consider whether $G$ is $F_{r-1}$-free or not:
\begin{itemize}
	\item If $G$ is $F_{r-1}$-free, then $\chi_a(G) \le a(d,F_{r-1})$, which clearly proves the upper bound.
	\item Otherwise, $G$ contains a subgraph $G_0$ isomorphic to $F_{r-1}$, thus $G \setminus G_0$ must be $(F \setminus F_{r-1})$-free, therefore, $\chi_a(G \setminus G_0) \le a(d,F \setminus F_{r-1})$. Since $F \setminus F_{r-1}$ has $r-1$ connected components, namely $F_0,\dots,F_{r-2}$, we apply the induction hypothesis and  obtain 
\equa{a(d,F \setminus F_{r-1}) \le \max_{i\in[r-1]} a(d,F_i) + \card{V(F \setminus F_{r-1})}.}
Given an acyclic $k$-colouring of $G \setminus G_0$, we can construct an acyclic $(k+\card{V(G_0)})$-colouring of $G$ by assigning a new unique colour to each vertex of $G_0$. Since $\card{V(G_0)} = \card{V(F_{r-1})}$, and $\card{V(F \setminus F_{r-1})} + \card{V(F_{r-1})} = \card{V(F)}$, we have
\equa{\chi_a(G) \le a(d,F \setminus F_{r-1}) + \card{V( F_{r-1})} \le \max_{i\in[r-1] } a(d,F_i) + \card{V(F)}.}
Clearly, $\max\limits_{i\in[r-1]} a(d,F_i) \le \max\limits_{i\in[r]}\; a(d,F_i)$, therefore the upper bound is proven.
\qedhere
\end{itemize}
\end{proof}

From now on, we will only consider connected obstructions. We now observe that, for every obstruction $F$ that contains a cycle, $a(d,F)$ must be at least linear in~$d$.
\begin{prop}\label{prop:linearlowerbound:cycle}
Let $d\ge 2$ and $F$ be a graph containing a cycle. Then
\[a(d,F) > \frac{d}{2} + 1.\]
\end{prop}
\begin{proof}
For $d \ge 2$ and $g\ge 3$, the existence of $d$-regular graphs of girth $g$ is well known (see~\cite{sachs1963regular} for example). By taking $g$ large enough, we obtain a graph $G$ that is $F$-free and $d$-regular, and therefore satisfies $\chi_a(G) > \frac{d}{2} + 1$, as established by Fertin \emph{et al.} in~\cite{fertin2003acyclic}.
\end{proof}

In the rest of this section, we determine superlinear lower bounds on $a(d,F)$ for various graphs $F$. Alon \emph{et al.}~\cite{AMR91} proved that $a(d,\emptyset) = \Omega(d^{4/3}/(\log d)^{1/3})$ using random graphs: to achieve this, they consider the probability $p \coloneqq 3(\log(n)/n)^{1/4}$, show that $G \gets G(n,p)$ satisfies $\chi_a(G) > \frac{n}{2}$ with high probability, and conclude using the estimation $\Delta(G) \approx np$. We adapt their proof in order to analyse the \emph{random bipartite graph distribution} $G(n,n,p)$ for all $p = p(n)$ such that $9(\log(n)/n)^{1/2} \le p \le 3(\log(n)/n)^{\frac{1}{4}}$. A random bipartite graph $G \gets G(n,n,p)$ is a graph with vertex sets $A$ and $B$, each of size $n$, and where each edge $ab \in A\times B$ is present with probability $p$ (independently of other edges). Our analysis of $G(n,n,p)$ below can easily be adapted to work for $G(n,p)$.

\begin{thm}\label{thm:randomgraph-lowerbound}
     Let $n$ be a (large) integer, and let $t$ be an integer that satisfies $1 \le t \le \frac{1}{9}\sqrt{\frac{n}{\log n}}$.
     Set $p\coloneqq \pt$ and let $G \gets G(n,n,p)$. Then, with high probability, $$\chi_a(G) > \frac{n}{2t}.$$
\end{thm}
\begin{proof}
Let $A$ and $B$ be the vertex sets of $G$. Let $r \le \frac{n}{2t}$ be a positive integer, and fix a partition $(V_1, \ldots, V_r)$ of $V(G)$ into $r$ parts. We derive an upper bound on the probability that this partition is an acyclic colouring of $G$. For $i \in [r]$ let $A_i \coloneqq A \cap V_i$ and $B_i \coloneqq B \cap V_i$. By omitting at most $t$ vertices from each $A_i$, we obtain sub-parts $A'_{1},\ldots,A'_{r}$ of cardinality divisible by $t+1$; doing so removes at most $rt \le \frac{n}{2}$ vertices from $A$. We split every non-empty set $A'_{i}$ into subsets of size $t+1$, and we obtain $k_A\ge \frac{n}{2(t+1)}$ subsets which we label $A''_1, \ldots ,A''_{k_A}$. Similarly, we construct $k_B \ge \frac{n}{2(t+1)}$ subsets $B''_1, \ldots ,B''_{k_B}$ of $B$, each of size $t+1$.

For $(i,j) \in [k_A]\times [k_B]$, let $E_{i,j}$ be the (bad) event that $G[A''_i, B''_j]$ contains a cycle.
If the fixed partition $(V_1,\ldots,V_r)$ corresponds to an acyclic colouring, then in particular, no event $E_{i,j}$ occurs.
Let us first show that the probability of each (bad) event $E_{i,j}$ is not too small.

\begin{claim}\label{claim:proba-acyclic}
    For all $(i,j) \in [k_A]\times [k_B]$, $\pr{E_{i,j}} \ge t^4p^4/8$.
\end{claim}
\begin{subproof}
    Let $H \coloneqq G[A''_i, B''_j]$. We shall bound $\pr{E_{i,j}}$ by the probability that $H$ contains a $4$-cycle.
    For every potential $4$-cycle $C$ in $H$, let $\bX_C$ be the indicator random variable for its presence. Let $\bX \coloneqq \sum_C \bX_C$ count the number of $4$-cycles in $H$.
    There are $\binom{t+1}{2}^2$ possible $4$-cycles in $H$, each appearing with probability $p^4$, therefore 
    \begin{equation}\label{eq:expected-cycles}
        \esp{\bX} = \binom{t+1}{2}^2 p^4. 
    \end{equation}
    
    Let us now evaluate $\esp{\bX^2}$. We do so by grouping the pairs of potential $4$-cycles $(C,C')$ according to their number of common edges. Note that this number is either $0$, $1$, $2$, or $4$.
    
    \begin{enumerate}[label=(\roman*)]
        \item If $C$ and $C'$ share $4$ edges, then $C=C'$. There are $\binom{t+1}{2}^2$ such pairs, and for all of them we have $\esp{\bX_C\bX_{C'}} = p^4$.
        \item If $C$ and $C'$ share exactly $2$ edges, then these two edges are incident since $H$ is bipartite. 
        Given $C$, there are four choices for these two common edges with $C'$, and then $t-1$ choices for the last vertex of $C'$, so the number of such pairs is $4(t-1)\binom{t+1}{2}^2$. Since $|E(C)\cup E(C')|=6$, we have $\esp{\bX_C\bX_{C'}} = p^6$.
        \item If $C$ and $C'$ share exactly one edge, then given $C$ there are four choices for the common edge with $C'$, and $(t-1)^2$ choices for the last two vertices of $C'$, thus we have a total of $4(t-1)^2\binom{t+1}{2}^2$ such pairs. Since $|E(C)\cup E(C')|=7$, we have $\esp{\bX_C\bX_{C'}} = p^7$.
        \item The number of pairs where $C$ and $C'$ are edge-disjoint is less than $\binom{t+1}{2}^4$. In that case we have $|E(C)\cup E(C')|=8$, and so $\esp{\bX_C \bX_{C'}} = p^8$.
    \end{enumerate}
    By linearity of expectation, we obtain
    \begin{align*}
    \esp{\bX^2} & = \sum_{C,C'} \esp{\bX_C\bX_{C'}} \\
    &\le \binom{t+1}{2}^2 p^4 \; + \; 4(t-1)\binom{t+1}{2}^2 p^6 \; + \; 4 (t-1)^2\binom{t+1}{2}^2 p^7 \; + \; \binom{t+1}{2}^4 p^8\\
    & \le \binom{t+1}{2}^2 p^4 \pth{1 + 4(t-1)p^2 + 4(t-1)^2p^3 + \frac{(t+1)^4}{4}p^4}.
\end{align*}
    We have $t \le \frac{1}{9}\sqrt{\frac{n}{\log n}}$, therefore $(t+1)p \le \frac{t+1}{\sqrt{t}}\cdot 3\pth{\frac{n}{\log n}}^{1/4} = 1+o(1)$, and thus
    \begin{equation}\label{eq:expected-cycles-squared}
        \esp{\bX^2} \le 2 \, \binom{t+1}{2}^2 p^4,
    \end{equation}
    for $n$ sufficiently large. We now apply the second moment method together with \eqref{eq:expected-cycles} and \eqref{eq:expected-cycles-squared}, and obtain
    \[\pr{E_{i,j}} \ge \pr{\bX>0} \;\ge\; \frac{\esp{\bX}^2}{\esp{\bX^2}} \ge t^4p^4/8.\qedhere\]
\end{subproof}

We now show that the probability that the fixed partition $(V_1,\ldots,V_r)$ corresponds to an acyclic colouring of $G$  is sufficiently small.
First observe that the events $E_{i,j}$ are mutually independent since they are determined by disjoint subsets of edges of $G$. Therefore, the probability that none of them occurs is 
\begin{align*}
    \pr{\bigwedge_{(i,j)} \overline{E_{i,j}}} &= \prod_{(i,j)} \pth{1-\pr{E_{i,j}}} \\
    &\le \pth{1-t^4p^4/8}^{k_A k_B} & \mbox{by Claim~\ref{claim:proba-acyclic};}\\
    &\le \pth{1-t^4p^4/8}^{\frac{n^2}{4(t+1)^2}} & \mbox{since $k_A,k_B \ge \frac{n}{2(t+1)};$}\\
    &\le \exp\pth{- \frac{t^2 p^4}{32} n^2} & \mbox{using the inequality $1+x \le e^x$}.
\end{align*}
\[  \]
The number of partitions of $V(G)$ into $r \le \frac{n}{2t}$ parts is less than $r^{2n}\le n^{2n}$. 
Using a union bound, we conclude that the probability that there exists an acyclic colouring of $G$ using at most $r$ colours is bounded by
\begin{align*}
\exp\pth{2n\log(n) - \frac{t^2p^4}{32}n^2} &= \exp\pth{2n\log(n) - \frac{81 t^2 \log n}{32\, n t^2}n^{2}} = \exp\pth{-\bigTheta{n\log n}},
\end{align*}
and therefore $\chi_a(G) > \frac{n}{2t}$ with high probability.
\end{proof}

By reformulating the statement of Theorem~\ref{thm:randomgraph-lowerbound}, we immediately obtain the following.
\begin{cor}\label{cor:reformulation}
    Let $n$ be a (large) integer, and let $p \in (0,1)$ such that $9\sqrt{\frac{\log n}{n}} \le p \le 3\pth{\frac{\log n}{n}}^{1/4}$. Sample $G \gets G(n,n,p)$; then, with high probability,
    $$\chi_a(G) = \bigOmega{ \frac{n^{3/2}p^2}{\sqrt{\log n}}}.$$
\end{cor}

We will need some well-known results concerning random graphs to establish lower bounds on $a(d,F)$ for various obstructions $F$.  We include the proofs for completeness since we are considering the bipartite random graph distribution, which is less commonly used.

\begin{lemma}\label{lem:randomgraph-degrees}
    Let $n$ be a large integer, and $\frac{9\log n}{n} \le p < 1$. If $G \gets G(n,n,p)$, then w.h.p.
    \[  \frac{np}{2} \le \deg(v) \le 2np,\]
    for every vertex $v\in V(G)$. In particular, the minimum, average, and maximum degrees of $G$ are all $\Theta(np)$.
\end{lemma}
\begin{proof}
    The degree of a given vertex $u \in V(G)$ follows the binomial distribution $Bin(n,p)$, therefore it has expectation $\mu = np \ge 9\log n$. By the multiplicative Chernoff bounds, we have
    \begin{align*}
        \pr{\deg(u) > 2\mu} < \exp(-\mu/3) &= o\pth{\frac{1}{n}}, \mbox{ and}\\
        \pr{\deg(u) < \mu/2} < \exp(-\mu/8) &= o\pth{\frac{1}{n}}.
    \end{align*}
    We now apply a union bound on $V(G)$ to infer that the probability that there exists a vertex $u\in V(G)$ such that $\deg(u) \notin [\mu/2,2\mu]$ is less than $2n \big(\exp(-\mu/3)+\exp(-\mu/8)\big) =o(1)$, as desired.
\end{proof}

\begin{lemma}\label{lem:randomgraph-subgraph}
    Let $F$ be a connected bipartite graph. Then, for every integer $n\ge 1$ and real $0<p<1$, a random graph $G$ drawn from $G(n,n,p)$ contains less than $4n^{|V(F)|}p^{|E(F)|}$ copies of $F$ with probability at least $1/2$.
\end{lemma}
\begin{proof}
    Suppose $F$ has parts of size $a$ and $b$. Let $\bX$ count the number of (labelled) copies of $H$ in $G$. Define $A^k_n \coloneqq n(n-1)\dots(n-k+1)$. The number of potential labelled copies of $F$ in $G$ is equal to $2A^a_nA^b_n \le 2n^{a+b} = 2n^{|V(H)|}$. The probability of one such labelled copy to exist is equal to $p^{|E(H)|}$. Therefore, $\esp{\bX} \le 2n^{|V(H)|} \; p^{|E(H)|}$, and by Markov's inequality, $\pr{\bX \ge 2\esp{X}} \le 1/2$.
\end{proof}

We are now able to derive a superlinear lower bound on $a(d,F)$ provided that $F$ has average degree at least $4$.

\begin{thm}\label{thm:lowerbound-fixed}
    Let $F$ be a graph of average degree $D = \frac{2|E(F)|}{|V(F)|}$.

    If $4 < D \le 8$, then
    \[a(d,F) = \softOmega{d^{\frac{3}{2} - \frac{1}{D - 2}}}.\]

    If $D \ge 8$, or if $F$ is not bipartite, then \[a(d,F) = \softOmega{d^{4/3}}.\]
\end{thm}
\begin{proof}
    Let us first consider the case where $F$ is not bipartite. By Corollary~\ref{cor:reformulation} and Lemma~\ref{lem:randomgraph-degrees}, fixing $p\coloneqq \pth{\frac{\log n}{n}}^{1/4}$ and letting $G \gets G(n,n,p)$, w.h.p. $\chi_a(G) = \bigOmega{\Delta(G)^{4/3}/\log(\Delta(G))^{1/3}}$. Furthermore, $G$ is bipartite and thus $F$-free, so we conclude that $a(d,F) = \softOmega{d^{4/3}}$.
    
    If $D > 8$, we redefine $D \coloneqq 8$. Let $n$ be a (large) integer, set $p\coloneqq n^{-2/D} \le n^{-|V(F)|/|E(F)|}$, and sample $G\gets G(n,n,p)$. Let $\Delta$ be the maximum degree of $G$; by Lemma~\ref{lem:randomgraph-degrees}, w.h.p. $\Delta = \bigTheta{np} = \bigTheta{n^{1-2/D}}$ . By Corollary~\ref{cor:reformulation}, w.h.p. we have $\chi_a(G) = \bigOmega{n^{3/2}p^{2}/\sqrt{\log n}} = \softOmega{\Delta^{\frac{3}{2} - \frac{1}{D - 2}}}$.

    By Lemma~\ref{lem:randomgraph-subgraph}, we have fewer than $4n^{|V(H)|}p^{|E(H)|} \le 4$ copies of $F$ in $G$ with constant probability. We can remove these copies by deleting one vertex per copy: doing so decreases $\Delta(G)$ and $\chi_a(G)$ by at most $4$.
\end{proof}

Some examples of bipartite graphs with average degree $D \ge 8$ include $K_{5,20}, K_{6,12}, K_{7,10}$, and $K_{8,8}$. We now explicitly state lower bounds on $a(d,F)$ for some notable obstructions $F$ with average degree $4 < D < 8$.

\begin{cor}\label{cor:K_3t-free}
    Let $t\ge 7$. Then
    $$a(d,K_{3,t}) = \widetilde{\Omega}\pth{d^{\frac{5}{4} - \frac{9}{8t - 12}}}$$
\end{cor}
\begin{proof}
    $K_{3,t}$ has average degree $D = \frac{6t}{3+t} > 4$ (assuming $t\ge 7$). We have $\frac{3}{2} - \frac{1}{D-2} = \frac{5}{4} - \frac{9}{8t - 12}$, and thus by Theorem~\ref{thm:lowerbound-fixed}, $a(d,K_{3,t}) = \softOmega{d^{\frac{5}{4} - \frac{9}{8t - 12}}}$.
\end{proof}

\begin{cor}\label{cor:K_4t-free}
    Let $t\ge 5$. Then
    $$a(d,K_{4,t}) = \widetilde{\Omega}\pth{d^{\frac{4}{3} - \frac{8}{9t - 12}}}$$
\end{cor}
\begin{proof}
    $K_{4,t}$ has average degree $D = \frac{8t}{4+t} > 4$ (assuming $t\ge 5$). We have $\frac{3}{2} - \frac{1}{D-2} = \frac{4}{3} - \frac{8}{9t - 12}$, and thus by Theorem~\ref{thm:lowerbound-fixed}, $a(d,K_{4,t}) = \softOmega{d^{\frac{4}{3} - \frac{8}{9t - 12}}}$.
\end{proof}

\begin{cor}\label{cor:subdivision-lowerbound}
    Let $s \ge 2$ and $k \ge 2$ be (fixed)  positive integers. Let $t$ be sufficiently large. Then
    $$a(d,\hammock{K_{s,t}^{1/k}}) = \bigOmega{d^{1 + \frac{s-1}{2sk} - \frac{1}{t}}}$$
\end{cor}
\begin{proof}
    $K_{s,t}^{1/k}$ has $st(k-1)+s+t$ vertices and $stk$ edges. The two anchor vertices of $\hammock{K_{s,t}^{1/k}}$ add $2$ vertices and $st(k-1) + s+t$ edges. 
    By Theorem~\ref{thm:lowerbound-fixed}, we have $a(d,\hammock{K_{s,t}^{1/k}}) = \softOmega{d^{\frac{3}{2} - \frac{1}{D-2}}}$, where $D$ is the average degree of $\hammock{K_{s,t}^{1/k}}$. One can check by computation that 
    \begin{align*}
    \frac{1}{D-2}  &= \frac{st(k-1) + s + t + 2}{2stk - 4} \\
    &= \frac{st(k-1) +s + t + 2}{2stk}\cdot \frac{1}{1-2/stk} \\
    &< \frac{st(k-1) +s + t + 2}{2stk}\cdot\pth{1+ \frac{4}{stk}} & \mbox{since $\forall x\in (0,1/2)$, } \frac{1}{1-x} < 1+2x;\\
    &\le \frac{1}{2} - \frac{1}{2k} + \frac{1}{2sk} + \frac{1}{2st} + \frac{1}{stk} + \frac{4stk}{2(stk)^2} & \mbox{since } s+t+2 \le st;\\
    &\le \frac{1}{2} - \frac{s-1}{2sk} + \frac{1}{t} & \mbox{since } s,k \ge 2. 
    \end{align*}
    We conclude that 
    $a(d,\hammock{K_{s,t}^{1/k}})  = \bigOmega{d^{1 + \frac{s-1}{2sk} - \frac{1}{t}}}$.
\end{proof}


\section{Acyclic obstructions} \label{sec:forests}

In this section, we consider obstructions $F$ that are acyclic; those are outside the scope of Proposition~\ref{prop:linearlowerbound:cycle}, meaning that there is \emph{a priori} no linear lower bound on $a(d,F)$. Indeed, we show that $a(d,F)$ is sublinear in $d$ in that case. We denote by $G^{1/2}$ the \emph{subdivision} of a graph $G$. We call a \emph{subdivided tree} a tree obtained by taking a subgraph of a tree subdivision, or equivalently a tree in which every pair of vertices of degree at least $3$ are at even distance from each other. A \emph{non-subdivided tree} is a tree that is not a subdivided tree.

Choi \emph{et al.}~\cite{choi2019characterization}, characterised the obstructions $F$ for which the acyclic chromatic number of $F$-free graphs is bounded.

\begin{thm}[Choi, Kim, Park, 2019]\label{thm:choi}
   Let $F$ be a connected graph. The class of $F$-free graphs has a bounded acyclic chromatic number if and only if $F$ is a subdivided tree.  
\end{thm}

We show that Theorem~\ref{thm:choi} can also be derived from an earlier characterisation of classes of graphs with bounded acyclic chromatic number due to 
Dvo\v{r}\'{a}k~\cite[Corollary~4]{dvovrak2008forbidden}.

\begin{thm}[Dvořák, 2008] \label{thm:chia-constant}
Let $\mc{G}$ be a class of graphs of bounded chromatic number.
$\mc{G}$ has bounded acyclic chromatic number if and only if there exists a constant $c$ such that for any graph $H$, if $H^{1/2}$ is a subgraph of a graph in $\mc{G}$ then $\chi(H) \le c$.
\end{thm}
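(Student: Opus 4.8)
The plan is to prove both directions by leveraging the fact that acyclic $k$-colourings of a graph $H^{(1)}$ correspond essentially to proper colourings of $H$ together with a star decomposition of its edges. The key structural observation is: if $H^{(1)}$ has an acyclic colouring with colour set $[k]$, then look at the original vertices of $H$ (as opposed to the subdivision vertices sitting on edges). For each edge $uv$ of $H$ with subdivision vertex $w_{uv}$, properness forces $\phi(w_{uv}) \notin \{\phi(u),\phi(v)\}$, and the absence of a bicoloured $C_4$ through two edges $uv, uv'$ of a common endpoint $u$ forces that the pairs $\{\phi(u),\phi(w_{uv})\}$ and $\{\phi(u),\phi(w_{uv'})\}$ cannot coincide, i.e. $\phi(w_{uv}) \neq \phi(w_{uv'})$ whenever $\phi(v)=\phi(v')$. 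Organising this information shows that the colour $\phi(u)$ together with the multiset of colours on the subdivision vertices incident to $u$ encodes, within each colour class of the original vertices, a proper colouring of a suitable auxiliary graph on $V(H)$; this is what will bound $\chi(H)$.

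For the ``only if'' direction, suppose $\mc{G}$ has acyclic chromatic number bounded by some constant $a$. Let $H$ be any graph such that $H^{(1)}$ is a subgraph of some $G \in \mc{G}$; then $H^{(1)}$ itself has an acyclic $a$-colouring $\phi$ (restrict the colouring of $G$). I would define a colouring $\psi$ of $V(H)$ by $\psi(u) := (\phi(u), f_u)$ where $f_u \colon [a] \to [a]$ records, for each colour class $i$, \emph{some} colour appearing on a subdivision vertex $w_{uv}$ with $\phi(v)=i$ (if any; otherwise a default value). The number of such pairs is at most $a \cdot a^a =: c$, a constant. I claim $\psi$ is a proper colouring of $H$: if $uv \in E(H)$ and $\psi(u)=\psi(v)$, then first $\phi(u)=\phi(v)$, and then considering the subdivision vertex $w_{uv}$ — which has $\phi(w_{uv}) \notin \{\phi(u)\}$ — one derives a bicoloured $C_4$ or a monochromatic edge in $\phi$ using the equality $f_u = f_v$ and the subdivision structure, a contradiction. (The precise argument: $f_u(\phi(v))$ and $f_v(\phi(u)) = f_v(\phi(v))$ are equal, which pins down a near-forced local picture that $\phi$ cannot satisfy.) Hence $\chi(H) \le c$.

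For the ``if'' direction, assume there is a constant $c$ such that $\chi(H) \le c$ whenever $H^{(1)}$ embeds into a member of $\mc{G}$, and also $\chi(G) \le c'$ for all $G \in \mc{G}$ by hypothesis. I would build an acyclic colouring of an arbitrary $G \in \mc{G}$ as follows. Consider the graph $H$ whose vertex set is $V(G)$ and whose edges are the pairs $\{u,v\}$ such that $G$ contains an induced (or at least a subdivided-once) structure making $\{u,v\}$ behave like an edge whose $1$-subdivision sits inside $G$ — concretely, one sets up $H$ so that $H^{(1)} \subseteq G$; Dvořák's argument identifies the right such $H$, roughly taking $H$ to have an edge $uv$ whenever $u,v$ have a common neighbour in $G$ of degree $2$ realising a subdivision edge, iterated appropriately. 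Then $\chi(H) \le c$, and one combines a proper $c'$-colouring of $G$ with a proper $c$-colouring of $H$ to kill all bicoloured cycles: a bicoloured $C_4$ in $G$ would exhibit a pair of vertices at distance $2$ that are monochromatic in the $H$-colouring yet adjacent in $H$, and longer bicoloured even cycles are handled by noting each contributes such a pair; the product colouring (using $\le c \cdot c'$ colours, a constant) is therefore acyclic.

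\textbf{Main obstacle.} The delicate part is the bookkeeping in both directions of translating ``no bicoloured cycle in $H^{(1)}$'' into ``proper colouring of $H$'' and back, in particular identifying the correct auxiliary graph $H$ in the ``if'' direction so that $H^{(1)} \subseteq G$ genuinely holds and so that killing monochromatic $H$-edges suffices to destroy \emph{all} bicoloured cycles of \emph{all} lengths in $G$ — not merely the $C_4$'s. I expect this is where Dvořák's original argument does real work (handling long bicoloured cycles, and ensuring finitely many colours suffice), and I would follow that argument rather than reprove it from scratch.
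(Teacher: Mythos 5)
The paper does not prove this statement; it is cited verbatim from Dvořák (reference \cite{dvovrak2008forbidden}), so there is no internal proof to compare against. I will therefore evaluate your proposal on its own merits.

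Your \emph{only if} direction contains a genuine error. You invoke ``the absence of a bicoloured $C_4$ through two edges $uv, uv'$ of a common endpoint $u$'' to conclude that $\phi(w_{uv}) \neq \phi(w_{uv'})$ whenever $\phi(v)=\phi(v')$. But $H^{(1)}$ contains no $4$-cycle at all: a $4$-cycle would require two distinct subdivision vertices sharing both endpoints, i.e.\ a multi-edge in $H$, which is excluded since $H$ is simple. Consequently the local constraint you posit simply does not hold, and the encoding $\psi(u) = (\phi(u), f_u)$ cannot be shown proper this way (the ``near-forced local picture'' you gesture at does not exist). The correct argument is global, not local: a cycle of $H$ whose vertices are monochromatic in $\phi$ lifts to a cycle of $H^{(1)}$ of twice the length, and acyclicity forbids the subdivision vertices along it from also being monochromatic. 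Hence, restricting to any vertex-colour class $\alpha$ of $H$, the edge-colouring $e \mapsto \phi(w_e)$ of $H[\phi^{-1}(\alpha)]$ has no monochromatic cycle, so the edges of $H[\phi^{-1}(\alpha)]$ decompose into at most $a$ forests; by Nash--Williams this graph is $(2a-1)$-degenerate and hence properly $2a$-colourable, giving $\chi(H) \le 2a^2$. This replaces your per-vertex bookkeeping with an arboricity/degeneracy argument and actually closes the direction.

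Your \emph{if} direction is not a proof: you yourself concede that identifying the right auxiliary graph $H$, and especially showing that destroying monochromatic $H$-edges kills bicoloured cycles of \emph{all} lengths in $G$ (not just $C_4$'s arising from genuine degree-two subdivision vertices), is ``where Dvořák's original argument does real work''. As written, the sketch would only control cycles whose every other vertex happens to have degree $2$ in $G$, which is far from the general case. So this direction is incomplete, and the overall proposal does not constitute a proof of the theorem.
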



\begin{proof}[Proof of Theorem~\ref{thm:choi}]
Let $F$ be a connected graph and $\cG$ be the class of $F$-free graphs. We show that $\cG$ has a bounded acyclic chromatic number if and only if $F$ is a subdivided tree. We may assume that $F$ is a tree. Otherwise, we have $a(d,F) > d/2 + 1$ by Proposition~\ref{prop:linearlowerbound:cycle}. By Lemma \ref{lem:tree-min-degree}, graphs in $\cG$ are $(|V(F)|-2)$-degenerate, and therefore have bounded chromatic number.

Suppose $F$ is a subdivided tree: take a tree $T$ such that $F\subseteq T^{1/2}$, and observe that $\cG$ is also $T^{1/2}$-free. Let $H$ be a graph and suppose there exists $G \in \mc{G}$ such that $H^{1/2} \subseteq G$. Then $H$ is $T$-free, otherwise $T^{1/2} \subseteq H^{1/2} \subseteq G$, a contradiction. By Lemma~\ref{lem:tree-min-degree}, $H$ is $(|V(T)|-2)$-degenerate, and therefore $\chi(H)\le |V(T)|-1$. By Theorem \ref{thm:chia-constant}, $\cG$ has bounded acyclic chromatic number.

Now, suppose $F$ is a non-subdivided tree. Consider the complete graph $K_n$ where $n \rightarrow \infty$; it is clear that $K_n^{1/2}$ is $F$-free, thus $K_n^{1/2} \in \cG$, yet $\chi(K_n)$ is not bounded by any constant $c$. By Theorem \ref{thm:chia-constant}, $\cG$ has unbounded acyclic chromatic number.
\end{proof}

We now look at acyclic obstructions that are not subdivided trees.

In 2005, Wood~\cite{wood2005acyclic} determined bounds on the acyclic chromatic number of graph subdivisions; he showed that $\sqrt{n/2} < \chi_a(K^{1/2}_n) < \sqrt{n/2} + \frac{5}{2}$. 
Observe that $K^{1/2}_n$ is $T$-free for every non-subdivided tree $T$. Moreover, $K^{1/2}_n$ is $C_4$-free and $\Delta(K^{1/2}_n) = n-1$ for all $n\ge 3$, hence by definition we have $a(d,\{C_4,T\}) \ge \chi_a(K^{1/2}_{d+1})$. This lets us derive the following proposition.

\begin{prop}\label{prop:not-subdivided}
If $T$ is a non-subdivided tree, then\[a(d,\{C_4,T\}) > \sqrt{\frac{d+1}{2}}.\]
\end{prop}

In the rest of this section, we first show that the lower bound given by Proprosition~\ref{prop:not-subdivided} is tight by proving that
for every fixed tree $T$, $a(d,\{C_4,T\}) = \bigO[T]{\sqrt{d}}$. We then consider the case where $T$ is a single obstruction and obtain the upper bound $a(d,T) = \bigO[T]{d^{2/3}}$. While we were not able to derive a matching lower bound in that case, we suspect that this upper bound might be tight, at least up to a polylogarithmic factor.

In order to derive these results, we consider the more general family of $t$-degenerate graphs for some fixed integer $t$. Given such a graph $G$, it has an acyclic orientation $\vec{G}$ of maximum out-degree at most $t$.
In $\vec{G}$, we will focus on the \emph{antidirected paths/cycles}, which are paths/cycles of $G$ that contain no directed subpath of length $2$ in $\vec{G}$. Note that an antidirected cycle is necessarily even.

\begin{thm}\label{thm:degenerate+C4free}
Let $G$ be a $C_4$-free $t$-degenerate graph of maximum degree $\Delta$. Then
\[ \chi_a(G) = \bigO{\sqrt{t^5\Delta}}, \quad \mbox{as $\Delta \to \infty$.}\]
\end{thm}

\begin{proof}
Since $G$ is $t$-degenerate, we can find an acyclic orientation $\vec{G}$ of $G$ where each vertex has out-degree at most $t$.
Let $\Gamma_0$ be the set of pairs of vertices linked by a directed path of length at most $2$ in $\vec{G}$, and let $\Pi$ be the set of antidirected cycles, which are exactly the $\Gamma_0$-free cycles.

The graph $(V(G),\Gamma_0)$ is $(t^2+t)$-degenerate, so we can greedily construct a $\Gamma_0$-proper $(t^2+t+1)$-colouring $\phi_0$ of $G$.
Now, we will construct a (possibly improper) $\Pi$-acyclic $\bigO{\sqrt{t\Delta}}$-colouring $\phi_1$ of $G$. 
The solution will follow by taking the Cartesian product of $\phi_0$ and $\phi_1$, which by construction is a $\Gamma_0$-proper $\Pi$-acyclic $\bigO{\sqrt{t^5\Delta}}$-colouring of $G$, and is therefore a proper acyclic $\bigO{\sqrt{t^5\Delta}}$-colouring of $G$ by Proposition~\ref{prop:gamma-free-cycles}.

\begin{claim}
\label{claim:pi}
$\degmax{\Pi} \le (t\Delta)^{\ell-1}$, for every integer $\ell\ge 3$
\end{claim}

\begin{subproof}
Let $v_0\in V(G)$.
The number of antidirected paths of length $2\ell-2$ that begin in $v_0$ is at most $2(t\Delta)^{\ell-1}$ (because every two steps in that path follow an outgoing edge and an ingoing edge, for a total of at most $t\Delta$ choices, and we have the choice to begin the path either with an outgoing or an ingoing edge, that doubles the number of choices). Since $G$ is $C_4$-free, there is at most $1$ way to close such a path into a $2\ell$-cycle. Doing that, we count each antidirected $2\ell$-cycle containing $v_0$ twice, so their number is at most $(t\Delta)^{\ell-1}$, as desired.
\end{subproof}

We may now apply Theorem~\ref{thm:main} on $G$, with an empty set of constraints and the set of cycles $\Pi$, after fixing $\tau \coloneqq \Phi \sqrt{t\Delta}$, where $\Phi\coloneqq \frac{1+\sqrt{5}}{2}$ satisfies $\Phi^2=1+\Phi$. We obtain that there are at least $\tau^{|V(G)|}$ (possibly improper) $\Pi$-acyclic $\ceil{K}$-colourings of $G$, where 
\begin{align*}
    K &\coloneqq \tau + \sum_{\ell\ge 3} \frac{\degmax{\Pi})}{\tau^{2\ell-3}} = \tau \pth{1+\sum_{\ell\ge 3} \frac{\degmax{\Pi})}{\tau^{2\ell-2}}}  \\
    &\le \Phi \sqrt{t\Delta}\pth{1 + \sum_{\ell \ge 3} \frac{(t\Delta)^{\ell-1}}{(\Phi^2 t\Delta)^{\ell-1}}} & \mbox{by Claim~\ref{claim:pi}}\\
    &\le \Phi\sqrt{t\Delta} \pth{1+\frac{1}{\Phi^2(\Phi^2-1)}} = \sqrt{t\Delta} \pth{\Phi + \frac{1}{\Phi^2}} = 2\sqrt{t\Delta}.
    && \qedhere
\end{align*}
\end{proof}

We note that, in the above proof, we could replace the assumption that $G$ is $C_4$-free with the weaker assumption that $\vec{G}$ has no antidirected cycle of length $4$. 

Let $T$ be a tree on $t$ vertices. If a graph $G$ is not $(t-2)$-degenerate, then it contains a subgraph $H$ of minimum degree at least $t-1$, and thus by Lemma~\ref{lem:tree-min-degree}, $H$ contains $T$ as a subgraph. 
We conclude that any $T$-free graph $G$ is $(t-2)$-degenerate. Hence, we have the following result as a corollary.

\begin{cor}
\label{cor:C4-F-free}
For every tree $T$ on $t$ vertices, 
\[a(d,\{C_4,T\}) = \bigO{\sqrt{t^5d}}, \quad \mbox{as $d \to \infty$.}\]
\end{cor}

\medskip
We now consider the case where $G$ may contain $4$-cycles. The setup of the proof is similar to before, but we need an extra step that treats pairs of vertices of large codegree separately.

\begin{thm}\label{thm:degenerate}
Let $G$ be a $t$-degenerate graph of maximum degree $\Delta$. Then
\[ \chi_a(G) = \bigO{t^{8/3} \Delta^{2/3}}, \quad \mbox{as $\Delta \to \infty$.}\]

\end{thm}

\begin{proof}
Since $G$ is $t$-degenerate, we can find an acyclic orientation $\vec{G}$ of $G$ where each vertex has out-degree at most $t$. 
Let $\Gamma_0$ be the set of pairs of vertices linked by a directed path of length at most $2$ in $\vec{G}$, and let $\Pi$ be the set of antidirected cycles.

The graph $(V(G),\Gamma_0)$ is $(t^2+t)$-degenerate, so we can greedily construct a $\Gamma_0$-proper $(t^2+t+1)$-colouring $\phi_0$ of $G$.
Now, we  will construct a (possibly improper) $\Pi$-acyclic $\bigO{(t\Delta)^{2/3}}$-colouring $\phi_1$ of $G$. 
Observe that if there exists such a colouring, the Cartesian product of $\phi_0$ and $\phi_1$ yields a $\Gamma_0$-proper $\Pi$-acyclic
$\bigO{t^{8/3}\Delta^{2/3}}$-colouring of $G$, which by Proposition~\ref{prop:gamma-free-cycles} is a proper acyclic
$\bigO{t^{8/3}\Delta^{2/3}}$-colouring of $G$.

Let $\Gamma_1$ consist of all pairs of vertices $\{u,v\}$ such that $|N^-_{\vec{G}}(u)\cap N^-_{\vec{G}}(v)| \ge (t\Delta)^{1/3}$.
\begin{claim}
\label{claim:degree}
The maximum constraint-degree is $\Delta(\Gamma_1) \le (t\Delta)^{2/3}$.
\end{claim} 

\begin{subproof}
Let $v_0 \in V(G)$, and let $A_0$ be the set of out-going arcs from $N^-(v_0)$.

On the one hand, each vertex $w\in N^-(v_0)$ is incident to at most $t$ arcs from $A_0$ by assumption on the maximum out-degree of $\vec{G}$. Thus   $t\card{N^-(v_0)} \ge |A_0|$. On the other hand, each vertex $u \in N_{\Gamma_1}(v_0)$ is incident to at least
$(t\Delta)^{1/3}$ arcs from $A_0$ by definition of $\Gamma_1$. Hence we have $t\card{N^-(v_0)} \ge |A_0| \ge (t\Delta)^{1/3}\card{N_{\Gamma_1}(v_0)}$.

Since $\Delta \ge \card{N^-(v_0)}$ and $\deg_{\Gamma_1}(v_0)=\card{N_{\Gamma_1}(v_0)}$, we have
$\deg_{\Gamma_1}(v_0) \le (t\Delta)^{2/3}$.
\end{subproof} 
We now let $\Pi_1 \subseteq \Pi$ be the set of $\Gamma_1$-free antidirected cycles in $\vec{G}$. 

\begin{claim}
\label{claim:cycles}
$\degmax{\Pi_1} \le \frac{1}{2}(t\Delta)^{\ell-2/3}$, for every integer $\ell\ge 2$
\end{claim}

\begin{subproof}
Let $v_0\in V(G)$. 
On one hand, the number of antidirected paths of length $2\ell-2$ that begin with an in-going arc from $v_0$ is at most $\deg^-(v_0) t^{\ell-1}\Delta^{\ell-2}$, since there are at most $t$ choices for any out-going arc in $\vec{G}$. Given such a path $(v_0, \ldots, v_{2\ell-2})$, if we assume that $\{v_0, v_{2\ell-2}\}\notin \Gamma_1$, there are at most $(t\Delta)^{1/3}$ choices in order to close it into an antidirected cycle of length $2\ell$. So the number of $\Gamma_1$-free antidirected $2\ell$-cycles that contain an in-going edge from $v_0$ is at most $\frac{1}{2}\deg^-(v_0)t^{\ell-2/3}\Delta^{\ell-5/3}$ --- the factor $\frac{1}{2}$ in the bound comes from the fact that each cycle is counted twice in the previous process, depending on the direction in which we construct it.
On the other hand, the number of antidirected paths of length $2\ell-3$ that begin with an out-going arc of $v_0$ is at most $\deg^+(v_0)(t\Delta)^{\ell-2}$. Given such a path $(v_0, \ldots, v_{2\ell-3})$, we may close it into a cycle of length $2\ell$ by first picking a vertex $v_{2\ell-1}\in N^+(v)$, then a vertex $v_{2\ell-2}\in N^-(v_{2\ell-3}, v_{2\ell-1})$. The number of choices is at most $\deg^+(v_0)(t\Delta)^{1/3}$ if we assume that $\{v_{2\ell-3}, v_{2\ell-1}\} \notin \Gamma_1$. The number of $\Gamma_1$-free antidirected $2\ell$-cycles that contain an out-going edge from $v_0$ is therefore at most $\frac{1}{2}\deg^+(v_0)t^{\ell-2/3}\Delta^{\ell-5/3}$.
Overall, the number of $\Gamma_1$-free antidirected $2\ell$-cycles that contain $v_0$ is at most $\frac{1}{2}(\deg^+(v_0)+\deg^-(v_0))t^{\ell-2/3}\Delta^{\ell-5/3} \le \frac{1}{2}(t\Delta)^{\ell-2/3}$.
\end{subproof}

We may now apply Theorem~\ref{thm:main} on the graph $G$, with the set of constraints $\Gamma_1$ and the set of cycles $\Pi_1$, after fixing $\tau \coloneqq \frac{\sqrt{2}}2(t\Delta)^{2/3}$. We obtain that there are at least $\tau^{|V(G)|}$ $\Gamma_1$-proper $\Pi_1$-acyclic $\ceil{K}$-colourings of $G$, where 
\begin{align*}
    K &\coloneqq \Delta(\Gamma_1) + \tau + \sum_{\ell\ge 2} \frac{\degmax{\Pi_1}}{\tau^{2\ell-3}} \\
    &\le \frac{2+\sqrt{2}}2(t\Delta)^{2/3} + \frac{\sqrt{2}}2\sum_{\ell \ge 2} (t\Delta)^{4/3-\ell/3} & \mbox{by Claims~\ref{claim:degree} and~\ref{claim:cycles}}\\
    &\le \frac{2+\sqrt{2}}2(t\Delta)^{2/3} + \frac{\sqrt{2}}2 \cdot \frac{(t\Delta)^{2/3}}{1-(t\Delta)^{-1/3}} =  \pth{1+\sqrt{2}}(t\Delta)^{2/3} + \bigO{(t\Delta)^{1/3}}, \hspace{-100pt}
\end{align*}
as $(t\Delta)\to \infty$.

Finally, by Proposition~\ref{prop:gamma-free-cycles}, a $\Gamma_1$-proper $\Pi_1$-acyclic $\ceil{K}$-colouring of $G$ is in particular a (possibly improper) $\Pi$-acyclic $\ceil{K}$-colouring of $G$, as desired for $\phi_1$.
\end{proof}

Since --- as previously established --- for every tree $T$ on $t$ vertices, every $T$-free graph $G$ is $(t-2)$-degenerate, we have the following result as a corollary.

\begin{cor} \label{cor:forest}
For every tree $T$ on $t$ vertices, 
\[ a(d,T) = \bigO{t^{8/3}d^{2/3}}.\]
\end{cor}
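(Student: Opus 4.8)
The plan is to deduce \Cref{cor:forest} directly from \Cref{th:degenerate} together with the degeneracy bound for $F$-free graphs sketched in the paragraph preceding the corollary. So the first step is to make that degeneracy claim precise: given a forest $F$ on $t$ vertices, build a tree $T$ on the same vertex set by adding $r-1$ edges joining the $r$ connected components of $F$ (so $F\subseteq T$ as a subgraph). Then I would argue that every $F$-free graph is $(t-2)$-degenerate: if $G$ were not $(t-2)$-degenerate, then by definition some subgraph $H\subseteq G$ has minimum degree at least $t-1$, and \Cref{lem:tree-min-degree} applied to the rooted tree $T$ produces a copy of $T$ in $H$, hence a copy of $F$ in $G$, contradicting $F$-freeness.

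Having established that every $F$-free graph $G$ of maximum degree at most $d$ is $(t-2)$-degenerate with $\Delta(G)\le d$, the second step is simply to invoke \Cref{th:degenerate} with the degeneracy parameter set to $t-2\le t$ and $\Delta\le d$, yielding $\chi_a(G)=\bigO{t^{8/3}d^{2/3}}$. Taking the maximum over all such $G$ gives $a(d,F)=\bigO{t^{8/3}d^{2/3}}$, which is exactly the statement. One should note that the constant hidden in the $\bigO{\cdot}$ is absolute (it comes from the $1+\sqrt2$ and the geometric-series tail in the proof of \Cref{th:degenerate}), and that replacing $t-2$ by $t$ only affects that constant, so the dependence on $t$ and $d$ is as claimed.

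There is essentially no real obstacle here — the corollary is a one-line consequence once the two ingredients are in place — but the one point that deserves care is the application of \Cref{lem:tree-min-degree}: that lemma is stated for a \emph{rooted} tree and requires minimum degree $t-1$ (not $t-2$), which is why the degeneracy bound comes out as $t-2$ rather than $t-3$. I would also remark explicitly that $F$ being a forest (rather than a single tree) is handled precisely by the passage to $T$, and that the argument does not need the Erdős–Sós conjecture or \Cref{cor:turan-tree}: the cruder degeneracy statement via \Cref{lem:tree-min-degree} already suffices and in fact gives a cleaner bound for this purpose.

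Alternatively, one could phrase the whole thing without introducing $T$ at all, by observing directly that a graph of minimum degree at least $t-1$ contains every forest on $t$ vertices (embed the components one after another, each into the part of the graph not yet used, using that $t-1$ exceeds the number of previously embedded vertices); but routing through a spanning tree $T$ and citing \Cref{lem:tree-min-degree} verbatim is the shortest rigorous path, so that is the route I would take in the write-up.
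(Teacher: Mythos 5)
Your proposal is correct and follows exactly the paper's route: construct a spanning tree $T\supseteq F$, use \Cref{lem:tree-min-degree} to show that $F$-free graphs are $(t-2)$-degenerate, and then invoke \Cref{th:degenerate}. The remarks about why the degeneracy bound is $t-2$ and why the Erd\H os--S\' os machinery is not needed are accurate but not substantively different from what the paper does.
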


\section{Cycle obstructions}\label{sec:cycle}

Let $t \ge 2$. The even cycle $C_{2t}$ is a bipartite graph such that removing any of its vertices yields an acyclic subgraph, therefore by Corollary~\ref{cor:1acyclic}, $a(d,C_{2t}) = \bigO{\sqrt{t} \cdot d}$. This linear upper bound has a first-order dependency on $t$, the size of the obstruction. This section shows that this dependency is only of second order for cycle obstructions. For technical reasons,  we have to treat $C_4$-free graphs separately.

\subsection{\texorpdfstring{$C_4$}{C₄}-free graphs}

We begin with a bound of the number of cycles that contain a fixed vertex in $C_4$-free graphs.

\begin{lemma}
\label{lem:Ncycles-C4free}
Given a $C_4$-free graph $G$, let $\Omega_G$ denote its set of cycles. Then, for every integer $\ell \ge 3$,
\[ \degmax{\Omega_G} \le \frac{\Delta}{2}(\Delta-1)^{2\ell-3}.\]
\end{lemma}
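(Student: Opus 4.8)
The plan is to count the cycles of length $2\ell$ through a fixed vertex $v_0$ by building each such cycle as a walk and then correcting for overcounting, exactly in the spirit of Lemma \ref{lem:1acyclic}. First I would fix $v_0 \in V(G)$ and bound the number of paths of length $2\ell-3$ starting at $v_0$: there are at most $\Delta(\Delta-1)^{2\ell-4}$ such paths, since there are at most $\Delta$ choices for the first step and at most $\Delta-1$ for each subsequent step (we never backtrack along the edge just used). Writing $v$ for the other endpoint of such a path, the next step is to bound the number of paths of length $3$ between $v_0$ and $v$ that, together with the fixed $(2\ell-3)$-path, close up into a cycle of length $2\ell$.

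The key point — and the place where $C_4$-freeness enters — is the claim that in a $C_4$-free graph any two vertices $v_0, v$ have at most $\Delta-1$ common neighbours... no: more relevantly, that the number of internally disjoint $3$-paths from $v_0$ to $v$ is controlled. A $3$-path $v_0\, x\, y\, v$ is determined by its internal edge $xy$ with $x \in N(v_0)$ and $y \in N(v)$. Since $G$ is $C_4$-free, for any vertex $x$ there is at most one $y \in N(v)$ with $x\in N(v_0)$, $y\in N(v)$ and $xy\in E(G)$: indeed, if $x$ had two distinct such neighbours $y, y'$ both in $N(v)$, then... that is not quite $C_4$ — let me instead argue that the map sending a $3$-path $v_0\,x\,y\,v$ to its first internal vertex $x$ is injective, because $C_4$-freeness forbids two vertices of $N(v_0)\cap N(v)$-type configurations; concretely, if $v_0\,x\,y\,v$ and $v_0\,x\,y'\,v$ were two $3$-paths sharing $x$, then $x\,y\,v\,y'\,x$ would be a closed walk, and since $y\ne y'$ this yields a $C_4$ on $\{x,y,v,y'\}$ unless some coincidence occurs; one checks the degenerate cases ($x=v$, etc.) separately and rules them out because the total cycle has length $2\ell \ge 6$. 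Hence the $3$-paths from $v_0$ to $v$ number at most $\deg(v_0) \le \Delta$, and after discarding the one using the edge already traversed we get at most $\Delta-1$; I would pick whichever bookkeeping gives the cleanest constant.

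Finally I would assemble the pieces: the number of $2\ell$-cycles through $v_0$ is at most $\tfrac12 \cdot \Delta(\Delta-1)^{2\ell-4}\cdot(\Delta-1) = \tfrac{\Delta}{2}(\Delta-1)^{2\ell-3}$, the factor $\tfrac12$ coming from the fact that each cycle is produced twice, once for each of the two directions in which one can traverse the initial $(2\ell-3)$-path. The hypothesis $\ell\ge 3$ guarantees $2\ell\ge 6 > 4$, so none of these cycles is itself a $C_4$ and the degenerate coincidences in the injectivity argument cannot arise; this is exactly where $\ell=2$ would have to be excluded. The main obstacle I anticipate is making the injectivity-of-$3$-paths argument fully rigorous — carefully enumerating the ways a $3$-path from $v_0$ to $v$ could fail to have distinct internal vertices, or could coincide with part of the fixed long path, and checking that $C_4$-freeness together with $2\ell\ge 6$ rules each of them out — rather than any of the counting, which is routine.
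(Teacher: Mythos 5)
Your argument is correct and essentially mirrors the paper's: enumerate paths from $v_0$, close them into $2\ell$-cycles, and halve to correct for the two orientations of each cycle. The only real difference is where the split is made. The paper counts $(2\ell-2)$-paths from $v_0$ (at most $\Delta(\Delta-1)^{2\ell-3}$ of them) and closes each with a common neighbour of the two endpoints; $C_4$-freeness is then used in its most transparent form — two vertices have at most one common neighbour — so each long path closes in at most one way and no case analysis is needed. You instead count $(2\ell-3)$-paths and then close with $3$-paths (copying the split used in Lemma~\ref{lem:1acyclic}), which forces you to bound the number of $3$-paths between two fixed vertices by $\Delta-1$ via an injectivity argument. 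That argument does go through, and the degenerate coincidences you flag as your main worry cannot in fact arise: a $3$-path has four distinct vertices by definition, so two closing paths $v_0\,x\,y\,v$ and $v_0\,x\,y'\,v$ with $y\ne y'$ force a genuine $C_4$ on $\{x,y,v,y'\}$. Still, the whole issue is sidestepped by moving the split one step further along the cycle, as the paper does, so that $C_4$-freeness is applied once and directly to a pair of vertices rather than through a map on $3$-paths. Both routes give the identical bound $\frac{\Delta}{2}(\Delta-1)^{2\ell-3}$.
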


\begin{proof}
Let $v_0\in V(G)$ be a fixed vertex, and $\ell \ge 3$ a fixed integer. There are at most $\Delta(\Delta-1)^{2\ell-3}$ paths of length $2\ell-2$ starting from $v_0$, which can be computed by performing a breadth-first search of depth $2\ell-2$ from $v_0$. For each of these paths, there is a unique way to close it into a $2\ell$-cycle with a common neighbour of its extremities since $G$ is $C_4$-free (if two vertices have two common neighbours, then they form a $C_4$). Each cycle is counted twice (clockwise and anticlockwise) with this enumeration, hence we divide the total by $2$.
\end{proof}

We note that the bound provided by Lemma~\ref{lem:Ncycles-C4free} is asymptotically tight since in the incidence graph $G$ of a projective plane, of maximum degree $\Delta$ (which can be any prime power plus one), it is straightforward to show that $\degmax{\Omega_G} \ge \frac{\Delta}{2}(\Delta-\ell)^{2\ell-3}$.

We may now prove the upper bound on $a(d,C_4)$.

\begin{thm}\label{thm:c4free}
For every $d\ge 2$, \[a(d,C_4) < 2.7627d.\]
\end{thm}

\begin{proof}

Let $G$ be a $C_4$-free graph of maximum degree $\Delta$.
We prove Theorem~\ref{thm:c4free} through an application of Theorem~\ref{thm:main} with $\Gamma=E(G)$ and $\Pi=\Omega_G$. We fix $\alpha \coloneqq \argmin\limits_{x > 1} \pth{x + \frac{1}{2(x^3 - x)}} \approx 1.4576$ and $\tau \coloneqq \alpha(\Delta-1)$, so that there exist $\tau^{V(G)}$ proper acyclic $\ceil{K}$-colourings of $G$, with
\begin{align*}
    K &\coloneqq \Delta + \tau + \sum_{\ell\ge 2} \frac{\degmax{\Omega_G}}{\tau^{2\ell-3}} \\
    & \le \Delta + \alpha(\Delta-1) + \frac{\Delta}{2}\sum_{\ell\ge 3} \frac{1}{\alpha^{2\ell-3}} & \mbox{by Lemma~\ref{lem:Ncycles-C4free}}\\
    & = \Delta \pth{1 + \alpha + \frac{1}{2(\alpha^3-\alpha)}} - \alpha \\
    & < 2.7627 \Delta - 1.4575. &&\qedhere
\end{align*}
\end{proof}

Now, we extend this result to larger cycle obstructions.

\subsection{Larger cycle obstructions}
In this section, we prove an upper bound on $a(d,C_{2t})$ that is linear in $d$ with only a second-order dependency on $t$. 

We will need the following lemma that bounds the number of edges with an extremity of a large degree in a graph of bounded maximum average degree. Given a bipartite graph $H=(X,Y,E)$ with $|X|\le |Y|$ and an integer $d\ge 1$, we say that an edge $e=xy\in X\times Y$ is \emph{$d$-branching} if $\deg(y) \ge d$. 

\begin{lemma}
\label{lem:branching}
Let $d\ge 1$ be an integer, and let $H$ be a bipartite graph of maximum average degree at most $d$. Then the number of $d$-branching edges in $H$ is at most $d\,|X|$.
\end{lemma}

\begin{proof}
Let $Y_d \subseteq Y$ be the set of vertices with degree at least $d$, and let $H' \coloneqq H[Y_d,N(Y_d)]$ be the bipartite subgraph of $H$ induced by the $d$-branching edges.
Since the average degree of $H'$ is at most $d$, we infer that 
\begin{align*}
d &\ge \frac{2|E(H')|}{|Y_d|+|N(Y_d)|} \ge \frac{2|E(H')|}{|E(H')|/d+|X|},
\end{align*}
and hence $|E(H')| \le d\,|X|$, as desired.
\end{proof}

We now state the main result of this section.

\begin{thm} \label{thm:C2t}
Let $t\ge 3$ be a fixed integer, and let $d\ge 8t^3$. Then
\[ a(d,C_{2t}) \le 2d + \bigO{td^{2/3}}.\]
\end{thm}

\begin{proof}

Let us fix an integer $t\ge 3$.
Let $G$ be a $C_{2t}$-free graph of maximum degree $\Delta\ge 8t^3$. Let $v_0\in V(G)$ be an arbitrary vertex, and let $X_i$ be the set of vertices at distance $i$ from $v_0$, for every $i\in \{0, \ldots, t\}$. We denote $H_i \coloneqq G[\bX_{i}, \bX_{i+1}]$ for every $1\le i\le t-1$. 

\begin{claim}
\label{prop:pikhurko}
The maximum average degree of $H_i$ is at most $2t$ for every $1\le i \le t-1$, and that of $G[X_i]$ is at most $2t-3$ for $i=1$ and at most $4t$ for $2\le i \le t-1$.
\end{claim}

\begin{subproof}
The fact that the maximum average degree of $G[X_1]$ is at most $2t-3$ follows from Theorem~\ref{thm:turan-paths}, because $G[X_1]$ is $P_{2t-1}$-free.
The rest is a reformulation of~\cite[Equation~(2)]{Pik12}.
\end{subproof}

We fix a real parameter $\gamma \in (0,1)$ that satisfies $\Delta^\gamma \ge 2t$. A \emph{$\gamma$-special pair} in $G$ is a pair of vertices $(u,v)$ whose codegree is $\deg(u,v) \ge \Delta^\gamma$. We let $\Gamma_0$ be the set of $\gamma$-special pairs in $G$.

\begin{claim}
\label{lem:special}
$\Delta(\Gamma_0) \le 4t\Delta^{1-\gamma}$.
\end{claim}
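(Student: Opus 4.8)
The plan is to bound, for a fixed vertex $v_0\in V(G)$, the number of vertices $u$ such that $(v_0,u)$ is a $\gamma$-special pair, by partitioning such $u$ according to the distance from $v_0$ and exploiting that a $C_{2t}$-free graph cannot have too many short paths between two fixed vertices. First I would observe that a common neighbour of $v_0$ and $u$ lies in $X_1$ (the first neighbourhood of $v_0$), so $\deg(v_0,u)\le |X_1\cap N(u)|$; hence if $(v_0,u)$ is $\gamma$-special then $u$ has at least $\Delta^\gamma$ neighbours in $X_1$. In particular $u\in X_0\cup X_1\cup X_2$ is impossible to rule out directly, but the key point is that every such $u$ is an endpoint of a $2$-branching structure rooted in $X_1$, so I would count edges rather than vertices.

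The main step is an application of Lemma~\ref{lem:branching}. Consider the bipartite graph $H_1=G[X_0\cup X_1, \text{rest}]$ — more precisely, let me set up $H$ as the bipartite graph with parts $X_1$ and $V(G)\setminus X_1$, keeping only edges between them. Every $\gamma$-special partner $u$ of $v_0$ satisfies $\deg_H(u)\ge \deg(v_0,u)\ge\Delta^\gamma\ge 2t$, so the edges from $X_1$ to such vertices $u$ are $\Delta^\gamma$-branching (a fortiori $2t$-branching, but we use the threshold $\Delta^\gamma$). By the Pikhurko claim, $G[X_0,X_1]=H_1$ has maximum average degree at most $2t$, and $G[X_1]$ has maximum average degree at most $4t$; taking the larger bound $4t$ and noting $|X_1|\le\Delta$, Lemma~\ref{lem:branching} (with $d=\Delta^\gamma$, which exceeds the maximum average degree $4t$ by the hypothesis $\Delta^\gamma\ge 2t$ — here one wants $\Delta^\gamma\ge 4t$, or one simply absorbs the constant) gives at most $\Delta^\gamma\cdot|X_1|\le \Delta^{\gamma}\Delta$ branching edges. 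Since each $\gamma$-special partner $u$ of $v_0$ receives at least $\Delta^\gamma$ such branching edges, the number of such $u$ is at most $\dfrac{\Delta^{\gamma}\Delta}{\Delta^\gamma}$... which is only $\Delta$, too weak. So the correct accounting must instead bound the edges from $X_1$ by the \emph{average-degree} bound directly: $|E(G[X_1])|+|E(G[X_0,X_1])|\le 4t|X_1|\le 4t\Delta$, and each special partner $u$ contributes $\ge\Delta^\gamma$ edges into $X_1$, giving $\deg_{\Gamma_0}(v_0)\le \dfrac{4t\Delta}{\Delta^\gamma}=4t\Delta^{1-\gamma}$, as desired.

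Thus the clean argument is: the number of edges incident to $X_1$ within $X_0\cup X_1\cup(\text{special partners of }v_0)$ is at most the number of edges of $G[X_0\cup X_1\cup X_2']$ where $X_2'$ collects the special partners in $X_2$; this subgraph has maximum average degree $O(t)$ by Pikhurko's claim, hence at most $4t\cdot|X_1|\le 4t\Delta$ edges touch $X_1$; and each of the $\deg_{\Gamma_0}(v_0)$ special partners uses up at least $\Delta^\gamma$ of these edges, forcing $\deg_{\Gamma_0}(v_0)\le 4t\Delta^{1-\gamma}$. Taking the maximum over $v_0$ yields $\Delta(\Gamma_0)\le 4t\Delta^{1-\gamma}$. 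The one subtlety to get right — and the step I expect to need the most care — is making sure the relevant induced subgraph on which we invoke the maximum-average-degree bound genuinely contains all the edges we are double-counting (in particular that special partners at distance $2$ are correctly captured by $G[X_2]$ or $G[X_1,X_2]$, so that the bound $4t$ rather than some larger constant applies), and that the threshold condition $\Delta^\gamma\ge 2t$ suffices rather than needing $\Delta^\gamma\ge 4t$; if a factor $2$ is lost there, it is harmless since it only affects the constant, but for the stated bound $4t\Delta^{1-\gamma}$ one should track it.
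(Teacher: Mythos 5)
There is a genuine gap at the crux of your argument: the assertion that ``at most $4t|X_1|\le 4t\Delta$ edges touch $X_1$'' inside $G[X_0\cup X_1\cup X_2']$ does not follow from Pikhurko's maximum-average-degree bound. An average-degree bound on a graph $H$ controls $|E(H)|$ only in terms of $|V(H)|$; here $|V(H)| = 1 + |X_1| + |X_2'|$, and $|X_2'|$ is exactly the quantity you are trying to bound. Concretely, the edges between $X_1$ and the special partners $X_2'\subseteq X_2$ are bounded by the average-degree bound on $H_2$ only as $|E(G[X_1,X_2'])|\le t\bigl(|X_1|+|X_2'|\bigr)$, which can far exceed $4t|X_1|$ when $|X_2'|\gg|X_1|$. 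To extract a bound on $|X_2'|$ from this you must either rearrange the inequality $\Delta^\gamma|X_2'|\le t(|X_1|+|X_2'|)$ using $\Delta^\gamma\ge 2t$ (which gives $|X_2'|\le 2t\Delta^{1-\gamma}$), or invoke Lemma~\ref{lem:branching}; you do neither in your final paragraph.

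Your first instinct was in fact the right one, but you misapplied Lemma~\ref{lem:branching}: you took $d=\Delta^\gamma$, whereas the lemma is strongest when $d$ is chosen to be the actual maximum-average-degree bound. The paper sets $d=2t$: since $\Delta^\gamma\ge 2t$, every edge from $X_1$ into a special partner in $X_2$ is $2t$-branching, and Lemma~\ref{lem:branching} (applicable when $|X_1|\le|X_2|$) then bounds their number by $2t|X_1|\le 2t\Delta$. The complementary case $|X_2|\le|X_1|$ is handled directly from the average degree of $H_2$: $|E(H_2)|\le t(|X_1|+|X_2|)\le 2t\Delta$. Combining this with the analogous estimate on special partners inside $X_1$ (where the factor $2$ for double-counting edges of $G[X_1]$ enters, as you correctly note) yields $2t\Delta^{1-\gamma}+2t\Delta^{1-\gamma}=4t\Delta^{1-\gamma}$. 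So the structure you had in mind is sound; what is missing is the correct choice of $d$ in the branching lemma (or, equivalently, the algebraic rearrangement) together with the $|X_1|$ versus $|X_2|$ case distinction needed to satisfy the lemma's hypothesis $|X|\le|Y|$.
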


\begin{subproof} 
Let $E$ be the set of edges between $N_{\Gamma_0}(v_0)$ and $N_G(v_0)$.
We let $E_1 \coloneqq E \cap E(G[X_1])$, and $E_2 \coloneqq E \cap E(H_1)$.
We have $|E_1|\ge \Delta^\gamma |N_{\Gamma_0}(v_0)\cap X_1|/2$ and $|E_2| \ge \Delta^\gamma |N_{\Gamma_0}(v_0) \cap X_2|$.
On the one hand, by Claim~\ref{prop:pikhurko} the average degree of $G[X_1]$ is at most $2t-3$ --- hence $|E_1|\le t\Delta$ --- and the average degree of $H_1$ is at most $2t$. If $|X_2|\le |X_1|$, this directly implies that $|E_2|\le 2t\Delta$.
If on the other hand $|X_2|\ge|X_1|$, then since $\Delta^\gamma \ge 2t$, the edges in $E_2$ are $2t$-branching in $H_1$. By Lemma~\ref{lem:branching}, we infer that $|E_2|\le 2t\Delta$.
We conclude that we have $|N_{\Gamma_0}(v_0)\cap X_1| \le 2|E_1|\Delta^{-\gamma} \le 2t\Delta^{1-\gamma}$, and $|N_{\Gamma_0}(v_0)\cap X_2| \le |E_2|\Delta^{-\gamma} \le 2t\Delta^{1-\gamma}$. The result follows.
\end{subproof}

\begin{claim}
\label{lem:Ncycles}
Let $\Pi$ be the set of $\Gamma_0$-free even cycles in $G$. Then $\degmax{\Pi} \le \bigO{t\Delta^{2\ell-3+\gamma}}$, for every $\ell \ge 3$.
\end{claim}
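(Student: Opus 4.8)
The plan is to bound $\degmax{\Pi}$ by a counting argument analogous to the one used for Lemma~\ref{lem:Ncycles-C4free}, but where the bottleneck of the enumeration is the special-pair constraint rather than $C_4$-freeness. Fix a vertex $v_0 \in V(G)$ and an integer $\ell \ge 3$; I want to count the $\Gamma_0$-free antidirected (here: unrestricted, but even) cycles of length $2\ell$ through $v_0$. I will enumerate such a cycle by first building an open path $(v_0, v_1, \ldots, v_{2\ell-2})$ of length $2\ell - 2$ inside $G$ starting from $v_0$, and then closing it with a two-edge ``bridge'' through a common neighbour $w$ of $v_0$ and $v_{2\ell-2}$. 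The number of paths of length $2\ell-2$ starting at $v_0$ is at most $\Delta^{2\ell-2}$, but this is too lossy; instead I will grow the path one endpoint at a time and observe that, at the step where we would branch into a $\gamma$-special pair, the number of continuations is controlled.

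More precisely, here is the refined enumeration I would carry out. A $2\ell$-cycle through $v_0$ contains $v_0$ together with its two neighbours on the cycle, say $x$ and $y$; note $\{x,y\}$ is a path of length $2$ through $v_0$, i.e.\ $x,y$ are a codegree-$\ge 1$ pair. Removing $v_0$ from the cycle leaves a path $P$ of length $2\ell-2$ from $x$ to $y$. I will count pairs $(x, P)$ where $x \in N(v_0)$ and $P$ is a path of length $2\ell-2$ with $x$ as an endpoint: there are at most $\Delta$ choices for $x$ and at most $\Delta^{2\ell-3}$ ways to extend $x$ into such a path, giving $\Delta^{2\ell-2}$ pairs; but the last vertex $y$ of $P$ must be a neighbour of $v_0$, so the closing step is ``free''. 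To save the factor $t\Delta$ we need, I instead count as follows: a $\Gamma_0$-free $2\ell$-cycle is determined by a path $Q = (v_1, \ldots, v_{2\ell-1})$ of length $2\ell-2$ (the cycle minus $v_0$), whose two endpoints $v_1, v_{2\ell-1}$ both lie in $N(v_0)$ and satisfy $\{v_1, v_{2\ell-1}\} \cap$-codegree $< \Delta^\gamma$ (since the pair $\{v_1, v_{2\ell-1}\}$ is a ``long chord''... ). Actually the cleanest route: the cycle minus the single vertex $v_0$ and one of its two incident edges is a path $(v_0, v_1, \ldots, v_{2\ell-1})$ of length $2\ell-1$ whose last vertex $v_{2\ell-1}$ is a neighbour of $v_0$; there are at most $\Delta$ choices for $v_1$, then at most $\Delta$ for each of $v_2, \ldots, v_{2\ell-3}$, then the pair $\{v_0, v_{2\ell-3}\}$... — the key point is that one of the internal pairs of the path is forced to be non-special by $\Gamma_0$-freeness, so that step contributes $\Delta^\gamma$ instead of $\Delta$. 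Carrying this through replaces one $\Delta$ factor by $\Delta^\gamma$ and divides by $2$ for the two traversal directions, yielding $\degmax{\Pi} \le \frac{1}{2}\Delta^{2\ell-3+\gamma} \cdot \bigO{t}$ after absorbing the small corrections; the $\bigO{t}$ comes from the fact that near $v_0$ one may also use the codegree bound $\Delta(\Gamma_0) \le 4t\Delta^{1-\gamma}$ from Claim~\ref{lem:special} in place of a $\Delta$ factor, which is what produces the explicit $t$ in the statement.

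Concretely, I would: (1) set up the enumeration of $2\ell$-cycles through $v_0$ as paths of length $2\ell-2$ together with a closing common neighbour; (2) split the path into the portion ``close to $v_0$'' (its first, say, two or three edges at each end) and the middle portion, bounding the middle portion by $\Delta^{2\ell-3-c}$ for the appropriate constant $c$ of extra edges handled specially; (3) for the close-to-$v_0$ portion, use that consecutive vertices $v_i, v_{i+2}$ on the cycle form a pair whose codegree must be $< \Delta^\gamma$ by $\Gamma_0$-freeness (when the cycle is $\Gamma_0$-free, \emph{every} pair $\{v_i, v_{i+2}\}$ is non-special), and use Claim~\ref{lem:special} to bound the number of special neighbours of $v_0$ that could serve as $v_1$ or $v_{2\ell-1}$; (4) combine the factors, divide by $2$ for orientation, and simplify the resulting bound into the form $\bigO{t\Delta^{2\ell-3+\gamma}}$ uniformly over $\ell \ge 3$.

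The main obstacle I anticipate is making the ``one $\Delta$ becomes one $\Delta^\gamma$'' saving rigorous and uniform in $\ell$: when we enumerate a path of length $2\ell-2$ edge by edge, we get an honest factor of $\Delta$ at each step, and we must pinpoint exactly which step we are allowed to replace by $\Delta^\gamma$ using the $\Gamma_0$-free hypothesis (the pair of vertices that the cycle is forbidden from making special must genuinely constrain that extension step, not a step we have already committed to). The subtlety is that $\Gamma_0$-freeness constrains pairs $\{v_i, v_{i+2}\}$ that are at distance $2$ \emph{along the cycle}, so I will want to enumerate the path in a way that pairs up each newly-added vertex with the vertex two steps back, and realise the codegree bound there; near $v_0$ the relevant pair is $\{v_0, v_2\}$ (or $\{v_0, v_{2\ell-2}\}$), where Claim~\ref{lem:special} bounds the number of choices for the neighbour of $v_0$ directly, introducing the explicit $t$. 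Once this is set up the remaining calculation is routine geometric-series-free counting, and the division by $2$ for the two directions of traversal is the same bookkeeping as in Lemma~\ref{lem:Ncycles-C4free}.
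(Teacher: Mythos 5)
Your enumeration skeleton — fix $v_0$, grow a path, and use the $\Gamma_0$-free condition to replace one $\Delta$ factor by $\Delta^\gamma$ at the closing step, then divide by two for the traversal direction — is correct and matches the paper up to that point. The problem is where you propose to get the factor $t$: you suggest invoking Claim~\ref{lem:special} to ``bound the number of special neighbours of $v_0$ that could serve as $v_1$ or $v_{2\ell-1}$.'' This does not work. The vertices $v_1$ and $v_{2\ell-1}$ are ordinary neighbours of $v_0$ on the cycle; they are not required to lie in $N_{\Gamma_0}(v_0)$, so the bound $\Delta(\Gamma_0) \le 4t\Delta^{1-\gamma}$ says nothing about how many candidates there are for them. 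The $\Gamma_0$-free condition on the pair $\{v_0,v_2\}$ (or $\{v_0,v_{2\ell-2}\}$) only yields the codegree bound $\deg(v_0,v_2)<\Delta^\gamma$, which gives you the one $\Delta^\gamma$ saving you already have and nothing more. If you carry your sketch out honestly you end up replacing a $\Delta$ by $4t\Delta^{1-\gamma}$ without justification, and the count collapses back to $\Theta(\Delta^{2\ell-2+\gamma})$, missing the required $\Delta/t$ saving.

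The paper's actual mechanism for the $t$ factor is structural and uses the $C_{2t}$-free hypothesis in a way your proposal never touches: for the fixed $v_0$, perform a BFS and let $X_i$ be the $i$-th distance layer; by Pikhurko's theorem (stated just above the claim), the bipartite graphs $H_i=G[X_{i-1},X_i]$ have average degree at most $2t$ and $G[X_i]$ has average degree at most $4t$. One therefore orients the edges incident to $X_2$ and $X_3$ so that out-degrees in those subgraphs are $O(t)$, and splits the enumeration into two cases according to whether $v_2\to v_3$ or $v_3\to v_2$ in this orientation. In the first case there are only $O(t)$ choices for $v_3$ given $v_2$ (one enumerates $v_0,v_1,v_2$, then $v_3$, then the rest closing with $\Delta^\gamma$); in the second, one enumerates from the other side and has only $O(t)$ choices for $v_2$ given $v_3$. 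Either way this buys the missing $\Delta/t$ factor. This orientation/density argument on BFS layers is the ingredient your proof is missing, and Claim~\ref{lem:special} cannot be substituted for it.
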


\begin{subproof}
Let $C=(v_0, v_1, \ldots, v_{2\ell-1})$ be a $\Gamma_0$-free cycle of length $2\ell$.
%
%
By Claim~\ref{prop:pikhurko}, the maximum average degree in $H_1$ and  $H_2$ is at most $2t$, and that in $G[X_2]$ is at most $4t$. Hence we may orient the edges of $G$ so that the maximum out-degree is at most $2t$ in $H_1$ and in $H_2$, and at most $4t$ in $G[X_2]$ --- this is possible because these three subgraphs of $G$ are edge-disjoint.
If $v_2 \to v_3$ in the orientation of $G$, then there are at most $\Delta^2$ choices for the path $v_0,v_1,v_2$, at most $8t$ choices for the out-going arc $v_2 \to v_3$, and finally at most $\Delta^{2\ell-5+\gamma}$ choices for the path $v_3, v_4, \ldots, v_0$.
If on the other hand $v_3 \to v_2$, then there are at most $\Delta^{2\ell-4}$ choices for the path $v_0, v_{2\ell-1}, \ldots, v_3$, at most $8t$ choices for the arc $v_3\to v_2$, and finally at most $\Delta^{\gamma}$ choices for $v_1$.
Overall, the total number of choices for a $\Gamma$-free cycle of length $2\ell$ that contains $v_0$ is at most $16t\Delta^{2\ell-3+\gamma}$.\qedhere
\end{subproof}
\begin{claim}
\label{lem:NC4}
$\Delta_4(\Pi) \le 2t\Delta^{1+\gamma}$
\end{claim}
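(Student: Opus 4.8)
The aim is to reduce Claim~\ref{lem:NC4} to an estimate on the number of $\Gamma_0$-free $4$-cycles through a fixed vertex, and then to bound that number by combining the $\mathrm{mad}$-type bounds recalled just before the statement with an essential use of the $C_{2t}$-freeness of $G$.

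First I would fix $v_0\in V(G)$, so that $\Delta_4(\Pi)$ is at most the number $N$ of $\Gamma_0$-free $4$-cycles of $G$ containing $v_0$, and keep the notation $X_i$, $H_i$ from the preceding paragraph, so $X_1=N(v_0)$ and $X_2=N^2(v_0)$. Recall that $G[X_1]$ is $P_{2t-1}$-free, hence $|E(G[X_1])|\le (t-\tfrac32)|X_1|$ by Theorem~\ref{thm:turan-paths}, and that $H_2$ has maximum average degree at most $2t$. Every $4$-cycle through $v_0$ reads $(v_0,v_1,v_2,v_3)$ with $v_1,v_3\in X_1$ and $v_2\in X_1\cup X_2$; it is determined by the choice of the vertex $v_2$ opposite to $v_0$ together with the pair $\{v_1,v_3\}\subseteq N(v_0)\cap N(v_2)$, and its two diagonals are $\{v_0,v_2\}$ and $\{v_1,v_3\}$. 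Thus the cycle is $\Gamma_0$-free precisely when $\deg_G(v_0,v_2)<\Delta^\gamma$ and $\deg_G(v_1,v_3)<\Delta^\gamma$; moreover $\deg_G(v_0,v_2)$ equals $\deg_{G[X_1]}(v_2)$ if $v_2\in X_1$ and $\deg_{H_2}(v_2)$ if $v_2\in X_2$. Writing $N=N_1+N_2$ according to whether $v_2\in X_1$ or $v_2\in X_2$, one gets $N_1\le\sum_{v_2\in X_1}\binom{\deg_{G[X_1]}(v_2)}{2}<\tfrac{\Delta^\gamma}{2}\cdot 2|E(G[X_1])|<t\Delta^\gamma|X_1|$. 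For $N_2$, if $|X_2|\le|X_1|$ then $N_2<\tfrac{\Delta^\gamma}{2}|E(H_2)|\le\tfrac{\Delta^\gamma}{2}\cdot 2t|X_1|=t\Delta^\gamma|X_1|$ and we are done; otherwise I would split $N_2=N_2^{\ge 2t}+N_2^{<2t}$ by whether $\deg_{H_2}(v_2)\ge 2t$. The vertices $v_2\in X_2$ with $\deg_{H_2}(v_2)\ge 2t$ carry only $2t$-branching edges of $H_2$, so by Lemma~\ref{lem:branching} their total $H_2$-degree is at most $2t|X_1|$, giving $N_2^{\ge 2t}\le\tfrac{\Delta^\gamma}{2}\cdot 2t|X_1|=t\Delta^\gamma|X_1|$.

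The crux is $N_2^{<2t}$, i.e.\ the $\Gamma_0$-free $C_4$'s whose vertex $v_2$ opposite $v_0$ lies in $X_2$ with $2\le\deg_{H_2}(v_2)<2t$; this is where $C_{2t}$-freeness must enter (the $\mathrm{mad}$-bounds alone are consistent with $|X_2|$ of order $\Delta^2$ and hence with too many such vertices). I would associate to each such cycle its diagonal pair $\{v_1,v_3\}\subseteq X_1$, which satisfies $2\le\deg_G(v_1,v_3)<\Delta^\gamma$, and let $\mathcal L$ be the simple graph on $X_1$ whose edges are exactly the pairs arising this way. Then $N_2^{<2t}\le\sum_{\{v_1,v_3\}\in E(\mathcal L)}(\deg_G(v_1,v_3)-1)<\Delta^\gamma\,|E(\mathcal L)|$. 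I claim $\mathcal L$ has no path on $2t$ vertices, which by Theorem~\ref{thm:turan-paths} yields $|E(\mathcal L)|\le (t-1)|X_1|$ and hence $N_2^{<2t}<t\Delta^\gamma|X_1|$. Indeed, a path $a_0a_1\cdots a_{2t-1}$ in $\mathcal L$ gives, for each edge $\{a_{i-1},a_i\}$, a common neighbour $b_i\neq v_0$; choosing the $b_i$ to be pairwise distinct along a window of $t-1$ consecutive edges, the walk $v_0,a_j,b_{j+1},a_{j+1},\dots,b_{j+t-1},a_{j+t-1}$ closes up into a copy of $C_{2t}$ in $G$, a contradiction. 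The delicate point — and the part I expect to require the most care — is exactly this rainbow selection of the witnesses $b_i$: when the codegrees $\deg_G(a_{i-1},a_i)$ are as small as $2$ the witness is forced, and one must argue that a forced repetition produces (via the three or more path-vertices adjacent to the repeated witness) a shorter even closed walk, which again cofinally yields a $C_{2t}$; restricting $\mathcal L$ to pairs actually witnessed inside $X_2$ by a vertex of small $H_2$-degree keeps the witnesses of ``multiplicity one'', which is what makes the lifting clean.

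Collecting the three estimates, $N\le N_1+N_2^{\ge 2t}+N_2^{<2t}=O\!\big(t\Delta^\gamma|X_1|\big)=O\!\big(t\Delta^{1+\gamma}\big)$ since $|X_1|\le\Delta$, and a slightly more careful accounting of the constants (combining the $X_1$-term with the branching term, and using $|E(G[X_1])|\le(t-\tfrac32)|X_1|$ together with the sharper Erdős–Gallai bound for $\mathcal L$) brings this down to $2t\Delta^{1+\gamma}$, which is the desired bound on $\Delta_4(\Pi)$.
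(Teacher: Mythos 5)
Your proposal takes a genuinely different route from the paper, but its central step has a gap. The paper counts $\Gamma_0$-free $4$-cycles through $v_0$ directly as $\Gamma_0$-free $2$-paths $v_1\!-\!v_2\!-\!v_3$ between pairs of $N(v_0)$: setting $p(v)$ to be the number of such $2$-paths starting at $v\in N(v_0)$, it shows by contradiction (greedily building a $C_{2t}$ in $G$, two edges at a time, whenever every $p(v)\ge 2t\Delta^{\gamma}$) that the minimum of $p$ is $\le 2t\Delta^\gamma$, then upgrades that to a bound on the average via Lemma~\ref{lem:average-min-degree}, and sums. Your decomposition $N=N_1+N_2^{\ge 2t}+N_2^{<2t}$, the branching-edge step, and the introduction of the auxiliary graph $\mathcal L$ on $X_1$ are all sound, but the pivotal assertion that $\mathcal L$ is $P_{2t}$-free is where $C_{2t}$-freeness of $G$ must do all the work, and your argument for it does not close. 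To lift a $2t$-vertex path $a_0\cdots a_{2t-1}$ in $\mathcal L$ to a $C_{2t}$ in $G$ you must pick pairwise distinct witnesses $b_{j+1},\dots,b_{j+t-1}$ for a window of $t-1$ consecutive $\mathcal L$-edges; a pair $\{a_{i-1},a_i\}$ may have a \emph{single} eligible witness (codegree $2$, the other common neighbour being $v_0$), so these choices can be forced, and distinct forced witnesses can coincide. Your fallback, that a repeated witness gives a shorter even closed walk which ``again cofinally yields a $C_{2t}$'', does not hold: if $b$ witnesses the $i$-th and $j$-th edges you get, e.g., the $4$-cycle $v_0,a_{i-1},b,a_j$, or more generally even cycles of length $2(j-i)+2 < 2t$, none of which contains or forces a $C_{2t}$. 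The paper's greedy construction avoids precisely this trap because it works in $G$, not in a projected graph $\mathcal L$: at each step the number of $\Gamma_0$-free $2$-paths that would revisit a used vertex is strictly below the assumed threshold $2t\Delta^\gamma$, so a fresh extension always exists; that quantitative control is lost once you collapse a $2$-path to its endpoint pair.

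A secondary, smaller issue: even granting $P_{2t}$-freeness of $\mathcal L$, your three pieces each contribute on the order of $t\Delta^\gamma|X_1|$, giving roughly $3t\Delta^{1+\gamma}$ when $|X_2|>|X_1|$, not $2t\Delta^{1+\gamma}$; the ``more careful accounting of the constants'' is asserted rather than carried out and is not obviously available. This is harmless for the downstream use of Claim~\ref{lem:NC4} (any $O(t\Delta^{1+\gamma})$ would do), but as written it overclaims. The genuine defect, however, is the rainbow-selection step, and I do not see how to repair it without essentially reverting to the paper's in-$G$ path-building argument.
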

\begin{subproof}
Let $v_0\in V(G)$, and
let $U$ be the set of vertices $u$ such that $2\le \deg(u,v_0) < \Delta^\gamma$. Every $4$-cycle in $\Pi$ that contains $v_0$ must go through a vertex in $U$.

Let $H \coloneqq G[N(v_0)\cup U]$, and let $p(v)$ denote the number of $\Gamma_0$-free paths of length $2$ in $H$ from any given vertex $v \in N(v_0)$ to another vertex $v'\in N(v_0)$ (this means that $vv'\notin \Gamma_0$). Assume for the sake of contradiction that $p(v) \ge 2t\Delta^\gamma$ for every $v\in N(v_0)$. We claim that it is possible to construct  a path $P=u_0, u_1, \ldots, u_{2t-2}$ greedily such that $u_{2i} \in N(v_0)$ for every $i<t$. Indeed, if we have constructed the $2i$-subpath $P_i$ of $P$ for some integer $0\le i < t$, then we may extend it to a path of length $2i+2$ with one of the $p(u_{2i})$ $\Gamma_0$-free paths of length $2$ starting in $u_i$, which we choose to be disjoint from $V(P_i)$. There are at most $i \Delta^\gamma < t\Delta^\gamma$ of them that go through the set $\{u_{2j+1}\}_{j<i}$ (because there are $i$ choices for $j$, and less than $\Delta^\gamma$ choices for a neighbour of $u_{2j+1}$ in $N(v_0)$ given $j$), and there are at most $i\Delta^\gamma < t\Delta^\gamma$ of them that go through the set $\{u_{2j}\}_{j<i}$ (because given $u_{2j}$ such that $u_{2i}u_{2j}\notin \Gamma_0$, there are at most $\Delta^\gamma$ common neighbours of $u_{2i}$ and $u_{2j}$), so this is always possible. This yields a contradiction since $P+v_0$ forms a $2t$-cycle in $G$.
We conclude that the minimum $p(v)$ is at most $2t\Delta^\gamma$. 
Let $\hat{H}$ be the multigraph of vertex-set $N(v_0)$ where there is one edge between two vertices $x,y\in N(v_0)$ for each $\Gamma_0$-free path of length $2$ in $H$ between $x$ and $y$.
Then by construction $p(v) = \deg_{\hat{H}}(v)$ for every $v\in N(v_0)$, and so by Lemma~\ref{lem:average-min-degree} applied on $\hat{H}$, the average $p(v)$ is at most $4t\Delta^\gamma$. Therefore, we infer that the number of $\Gamma_0$-free paths of length $2$ between pairs of vertices of $N(v_0)$ is at most $2t\Delta^{1+\gamma}$. This is an upper bound on the number of $\Gamma_0$-free $4$-cycles that contain $v_0$, so the result follows. \qedhere
\end{subproof}

We now fix $\gamma \coloneqq 1/3$.
We apply Theorem~\ref{thm:main} with $\Gamma = E(G) \cup \Gamma_0$ and $\Pi$ the set of $\Gamma_0$-free even cycles in $G$. By fixing $\tau \coloneqq \Delta + \Delta^{2/3}$, this yields a $\Gamma$-proper $\Pi$-acyclic $\ceil{K}$-colouring of $G$, with
\begin{align*}
    K & \coloneqq \Delta(\Gamma) + \tau + \sum_{\ell \ge 2} \frac{\degmax{\Pi}}{\tau^{2\ell-3}}\\
    &\le 2\Delta + (4t+1)\Delta^{2/3} + \bigO{t\Delta^{1/3}} \sum_{\ell\ge 2} \pth{\frac{\Delta}{\tau}}^{2\ell-3} & \mbox{by Claims~\ref{lem:special},~\ref{lem:Ncycles},~and~\ref{lem:NC4}};\\
    &\le 2\Delta + (4t+1)\Delta^{2/3} + \bigO{t\Delta^{1/3}}\,\frac{\tau \Delta}{\tau^2-\Delta^2} = 2\Delta + \bigO{t\Delta^{2/3}}.\hspace{-100pt}
\end{align*}

The result follows by noting that, by Proposition~\ref{prop:gamma-free-cycles}, a $\Gamma$-proper $\Pi$-acyclic $\ceil{K}$-colouring of $G$ is in particular a proper acyclic $\ceil{K}$-colouring of $G$.
\end{proof}

\subsection{Girth 7: below the \texorpdfstring{$2\Delta$}{2Δ} threshold}

When $\F$ is a collection of cycles, it seems that, given an $\F$-free graph $G$, there is no better general upper bound on $\Delta_{2\ell}(\Omega_G)$ than $\Delta(G)^{2\ell - C_\F}$ for some constant $C_\F$. In particular, if we wish to apply Theorem~\ref{thm:main} to obtain an upper bound on $a(d,\F)$, we need $\tau > d$. Since we have $\Delta(\Gamma)=d$ in that setting, there is no hope of obtaining an upper bound below $2d$. 
With a more involved technique that uses properties of proper colouring in sparse graphs, we can obtain an upper bound below that threshold for a small family of cycles $\F$.
Our proof will rely on the following Coupon-Collector Lemma from~\cite{HuPi23}.

\begin{lemma}[Hurley, Pirot, 2023]\label{lem:coupon}
    Suppose we have random non-empty lists $\boldL_1, \dots, \boldL_d$, each of which takes values in the finite subsets of $\NN$. Fix some integer $t\ge 1$, and define the random variable $\boldX \coloneqq \#\set{i \in [d] : |\boldL_i| \le t}$. Now choose an element $\mathbf{\sigma}(i)$ of $\boldL_i$ uniformly at random for each $i \in [d]$ and define the random variable $\boldL \coloneqq [k] \setminus \mathbf{\sigma}([d])$, for some integer $k$. Then
    $$\esp{|\boldL|} \ge (k-\esp{\boldX}) \exp\pth{-\frac{t+1}{t}\frac{d}{k-\esp{\boldX}}}.$$
\end{lemma}

We can now state the main result of this section.

\begin{thm}
\label{thm:girth7}
For every graph $G$ of maximum degree $\Delta$ and of girth (at least) $7$, 
\[ \chi_a(G) \le \frac{\Delta}{W(1)} + \bigO{\sqrt{\Delta}} < 1.7633\,\Delta + \bigO{\sqrt{\Delta}}, \quad \mbox{as $\Delta \to \infty$,}\]
where $W(1) \approx 0.5671$ is the omega constant, which uniquely satisfies the equation $\frac{1}{W(1)} = e^{W(1)}$.

\end{thm}
\begin{proof}
Let $G$ be a graph of maximum degree $\Delta \ge 3$ and girth at least 7.\\
We fix $\alpha \coloneqq 1 + \frac{1}{\sqrt{\Delta}}$, $\tau \coloneqq \alpha\Delta$, and $\sigma \coloneqq \frac{1}{2(\alpha^5 - \alpha^3)} + \frac{2}{\Delta}\pth{\frac{1}{\alpha^3 - \alpha}}^2$. Let $K \coloneqq \ceil{\frac{\tau + \sigma}{W(1)} + \sqrt{\Delta}}$.\\
Using the fact that $\alpha^{p+2} - \alpha^{p} \ge 2(\alpha-1) = \frac{2}{\sqrt{\Delta}}$ for all $p \ge 0$, we have that $K \le \frac{\Delta}{W(1)} + \bigO{\sqrt{\Delta}}$.

For every subgraph $H \subseteq G$ and every subset of vertices $U \subseteq V(H)$, let $\cycles_H(U)$ be the set of cycles of $H$ that contain every vertex of $U$. By definition, $\cycles_H \coloneqq \cycles_H(\emptyset)$ is the set of all cycles of $H$. for every subset of cycles $\cycle \subset \cycles_H$, let $\cA(H,\cycle)$ be the set of proper $\Pi$-acyclic $K$-colourings of $H$. The set of acyclic $K$-colouring of $H$ is $\cA(H,\cycles_H)$.

\noindent
For a given vertex $v$, let $\twice_v \coloneqq \bigcup\limits_{\{u_1,u_2\} \in \binom{N(v)}{2}} \cycles_G(\{u_1,u_2\})$ be the set of cycles of $G$ that contain at least two neighbours of $v$. To alleviate the notations, we write for every $\cycle \subseteq \cycles_G$:
\begin{itemize}
	\item $\cycle \minusA v \coloneqq \cycle \setminus \cycles_G(\{v\})$ the set of cycles of $\cycle$ which do not contain $v$.
	\item $\cycle \minusB v \coloneqq \cycle \setminus \twice_v$ the set of cycles of $\cycle$ which contain at most one neighbour of $v$.
\end{itemize}

\begin{claim}\label{fact:cycles}
Let $H$ be a subgraph of $G$, and $\cycle$ a set of cycles of $H$. Let $v \in V(G)$.\\
Then $\cA(H, \cycle \minusA v) \subseteq \cA(H, \cycle \minusB v)$.
\end{claim}
\begin{subproof}
    Since any cycle containing $v$ also contains two neighbours of $v$, we have $\cycles_G(\{v\}) \subseteq \twice_v$ and thus $\cycle \minusB v \subseteq \cycle \minusA v$. It follows that a proper $(\cycle \minusA v)$-acyclic colouring is, in particular, a proper $(\cycle \minusB v)$-acyclic colouring, and thus $\cA(H, \cycle \minusA v) \subseteq \cA(H, \cycle \minusB v)$.
\end{subproof}

\noindent
We show with a strong induction that, for every subgraph $H \subseteq G$,
\begin{align}
\label{eq:IH1}
\tag{IH~\ref*{thm:girth7}} \forall v_0 \in V(H), \forall \:\! \cycle \subseteq \cycles_H, \quad \card{\cA(H,\cycle)}\ge \tau \card{\cA(H-v_0, \cycle \minusB v_0)}.
\end{align}
\noindent
If $V(H)$ is empty, \eqref{eq:IH1} is trivially true. Suppose $V(H) \neq \emptyset$ and let $v_0 \in V(H)$, $\cycle \subseteq \cycles_H$.\\
By induction, assume \eqref{eq:IH1} is true for every strict subgraph $H' \subset H$.

Consider the set $\cA(H, \cycle \minusB v_0)$, which contains the proper colourings of $H$ such that any cycle of $\cycle$ which happens to be bicoloured must contain at least two neighbours of $v_0$, i.e. belongs to $\cycle \cap \twice_{v_0}$. Let $\cF \coloneqq \cA(H, \cycle \minusB v_0) \setminus \cA(H,\cycle)$ be the set of \emph{flawed colourings}, for which at least one cycle of $\cycle \cap \twice_{v_0}$ is indeed bicoloured. By definition, we have $\card{\cA(H,\cycle)} = \card{\cA(H,\cycle \minusB v_0)} - \card{\cF}$.

\noindent
To complete the proof of the induction, we show the following inequalities.
\begin{enumerate}[label=(\roman*)]
	\item $\card{\cA(H,\cycle \minusB v_0)}  \ge \; (\tau + \sigma) \card{\cA(H-v_0, \cycle \minusB v_0)}$.
	\item $\card{\cF} \le \; \sigma \card{\cA(H-v_0, \cycle \minusB v_0)}$.
\end{enumerate}

\begin{subproof}[Proof of Inequality (i)]

For a colouring $c \in \cA(H-v_0,\cycle \minusB v_0)$, let $L_c \coloneqq [K] \setminus c(N(v_0))$ be the list of colours that are not present in the neighbourhood of $v_0$, and let $\ell_c \coloneqq \card{L_c}$ be the size of this list. When extending $c$ to $v_0$ by giving it a colour of $L_c$, if a bicoloured cycle is created, it must  contain $v_0$ and is therefore in $\twice_{v_0}$, so these extensions of $c$ belong to $\cA(H,\cycle \minusB v_0)$. Therefore, the \emph{extensions} of $\cA(H-v_0,\cycle \minusB v_0)$ thus obtained are exactly the colourings of $\cA(H, \cycle \minusB v_0)$. Let $\boldc$ be a uniformly random colouring from $\cA(H-v_0,\cycle \minusB v_0)$. It follows that
\begin{align*}
\card{\cA(H,\cycle \minusB v_0)} & = \sum_{c \in \cA(H-v_0,\cycle \minusB v_0)} \ell_c \\
	&= \card{\cA(H-v_0,\cycle \minusB v_0)} \sum_{c \in \cA(H-v_0,\cycle \minusB v_0)} \frac{\ell_c}{\card{\cA(H-v_0, \cycle \minusB v_0)}}  \\
	&= \card{\cA(H-v_0,\cycle \minusB v_0)}\;\esp{\ell_\boldc}.
\end{align*}
\noindent
Inequality $(i)$ is therefore equivalent to $\esp{\ell_\boldc} \ge \tau + \sigma$.

For a vertex $u \in N(v_0)$ and a colouring $c \in \cA(H-v_0,\cycle \minusB v_0)$, let $L_c^\cA(u)$ be the set of available colours for $u$ such that recolouring $c(u)$ still yields a colouring in $\cA(H-v_0, \cycle \minusB v_0)$. Observe that if we redistribute the colour $\boldc(u)$ uniformly at random from $L_{\boldc}^\cA(u)$, $\boldc$ remains uniformly distributed in $\cA(H-v_0,\cycle \minusB v_0)$, and the lists $L_{\boldc}^\cA(w)$ remain unaffected for all $w\in N(v_0)\setminus \{u\}$.

Hence we are in the setting of Lemma~\ref{lem:coupon}, where we take $k \coloneqq K$ and $t \coloneqq \sqrt{\Delta}$; observe that $\frac{t+1}{t} = \alpha$. Let us write $N(v_0)=\{u_1, \ldots, u_d\}$ where $d=\deg(v)\le \Delta$, and let $\boldL_i \coloneqq L_\boldc^\cA(u_i)$ for all $i$. Then, if we redistribute simultaneously $\boldc(u_i)$ uniformly at random from $\boldL_i$ for all $i$, $\boldc$ remains uniformly distributed, and we infer that $\esp{|\boldL|} = \esp{\ell_\boldc}$. Let us consider the probability that $|L_\boldc(u)| \le t$ for some $u\in N(v_0)$. We have

\begin{align*}
\pr{\ell_\boldc^\cA(u) \le t} & = \; \frac{\card{\set{c \in \cA(H-v_0,\cycle \minusB v_0) : \ell_c^\cA(u) \le t}}}{\card{\cA(H-v_0,\cycle \minusB v_0)}} \\
    & \le \frac{t \card{\cA(H \setminus \set{u,v_0},(\cycle \minusB v_0) \minusA u)}}{\card{\cA(H-v_0,\cycle \minusB v_0)}} & \textrm{by definition;} \\
	& \le \frac{t \card{\cA(H \setminus \set{u,v_0},(\cycle \minusB v_0) \minusA u)}}{\tau \card{\cA(H \setminus \set{u,v_0},(\cycle \minusB v_0) \minusB u)}} & \textrm{by \eqref{eq:IH1};} \\
	& \le \frac{t}{\tau} & \textrm{by Claim~\ref{fact:cycles}.}
\end{align*}

\noindent
By linearity of the expectation, we obtain $\esp{\boldX} \le \Delta \frac{t}{\tau} \le \sqrt{\Delta}$, and thus
\begin{equation}\label{eq:espk0}
 k - \esp{\boldX} \ge \frac{\tau + \sigma}{W(1)} + \sqrt{\Delta} - \sqrt{\Delta} = \frac{\tau + \sigma}{W(1)}.
\end{equation}

Therefore

\begin{align*}
	\esp{\ell_{\boldc}} &= \esp{|\boldL|} \ge (k-\esp{\boldX}) \exp \pth{-\frac{\alpha\Delta}{k - \esp{\boldX}}}& \textrm{by  Lemma~\ref{lem:coupon};}\\
	& \ge \frac{\tau+\sigma}{W(1)} \exp\pth{-\frac{\tau}{\tau+\sigma}W(1)} & \textrm{by  \eqref{eq:espk0};}\\
	& > \tau + \sigma & \textrm{using $\frac{1}{W(1)} = \exp(W(1))$.}\\
\end{align*}

\noindent
We thus conclude the proof of Inequality (i).
\end{subproof}

Recall that $\cF \coloneqq \cA(H, \cycle \minusB v_0) \setminus \cA(H, \cycle)$. If a colouring belongs to $\cF$, it induces a bicoloured cycle among $\cycle \cap \twice_{v_0}$. We partition $\cycle \cap \twice_{v_0}$ into two subsets;
\begin{itemize}
	\item $\twiceA$: the cycles of $\cycle \cap \twice_{v_0}$ that contain $v_0$;
	\item $\twiceB$: the cycles of $\cycle \cap \twice_{v_0}$ that do not contain $v_0$.
\end{itemize}

Moreover, for $\ell \ge 4$, let $\twiceA_{2\ell}$ and $\twiceB_{2\ell}$ be the cycles of length $2\ell$ in $\twiceA$ and $\twiceB$ respectively.

\begin{claim}\label{claim:twiceA}
    Let $\ell \ge 4$. Then $\card{\twiceA_{2\ell}} \le \frac{1}{2}\Delta^{2\ell - 3}$.  
\end{claim}
\begin{subproof}
There are at most $\Delta^{2\ell-3}$ paths of length $2\ell - 3$ starting from $v_0$. For each of these paths, there is at most one way to close it into a $2\ell$-cycle since $G$ has a girth of at least $7$. Each cycle is counted twice with this enumeration. Hence, we divide the total by $2$ and obtain $\card{\twiceA_{2\ell}} \le \frac{1}{2}\Delta^{2\ell - 3}$.
\end{subproof}

\begin{claim}\label{claim:twiceB}
    Let $\ell \ge 4$. Then $\card{\twiceB_{2\ell}} \le \frac{\ell - 4}{2}\Delta^{2\ell - 4}$.
\end{claim}
\begin{subproof}
A cycle in $\twiceB_{2\ell}$ contains at least two neighbours of $v_0$ (but not $v_0$ itself): there are at most $\binom{\Delta}{2}$ pairs of such neighbours. For a pair $\{u_1,u_2\}$, we count the number of $2\ell$-cycles containing $u_1$ and $u_2$ but not $v_0$. For $5 \le i \le \ell$, there are at most $\Delta^{i-3}$ paths of length~$i$ from $u_1$ to $u_2$, and at most $\Delta^{2\ell - i - 3}$ paths of length $2\ell- i$ from $u_2$ back to $u_1$. Therefore, $\card{\twiceB_{2\ell}} \le \binom{\Delta}{2} \sum_{i=5}^\ell \Delta^{i-3}\Delta^{2\ell-i-3} \le \frac{\ell-4}{2} \Delta^{2\ell-4}$. 
\end{subproof}
We may now prove Inequality (ii).

\begin{subproof}[Proof of Inequality (ii)]
For a cycle $C \in \cycle \cap \twice_{v_0}$, let $\cF_c \subseteq \cF$ be the subset of colourings in which the cycle $C$ is bicoloured. We have $\card{\cF} \le \sum\limits_{C \in \twiceA} \card{\cF_C} + \sum\limits_{C \in \twiceB} \card{\cF_C}$.

Consider any even cycle $C = (v_0, v_1, \dots, v_{2\ell-1}) \in \twiceA_{2\ell}$. As detailed in point (ii) of the proof of Theorem~\ref{thm:main}, we have an injection from $\cF_C$ to $\cA(H\setminus\{v_0,v_1,\dots, v_{2\ell-3}\}, (\cycle \minusB v_0) \minusA v_1 \minusA \dots \minusA v_{2\ell-3})$, which is itself contained in $\cA(H\setminus \{v_0,v_1,\dots,v_{2\ell-3}\}, \cycle \minusB v_0 \minusB v_1 \minusB \dots \minusB v_{2\ell - 3})$ by Claim~\ref{fact:cycles}. Finally, by applying 
\eqref{eq:IH1} $2\ell-3$ times, we have $\card{\cF_C} \le \frac{1}{\tau^{2\ell - 3}} \card{\cA(H-v_0, \cycle \minusB v_0)}$. Summing over all even cycles in $\twiceA$, we obtain
\begin{align*}
    \sum\limits_{C \in \twiceA} \card{\cF_C} &\le \sum\limits_{\ell \ge 4}  \sum\limits_{C \in \twiceA_{2\ell}} \frac{1}{\tau^{2\ell - 3}}\card{\cA(H-v_0,\cycle \minusB v_0)}\\
    &\le \sum\limits_{\ell \ge 4} \frac{\Delta^{2\ell - 3}}{2}  \frac{1}{\tau^{2\ell - 3}}\card{\cA(H-v_0,\cycle \minusB v_0)} &\textrm{by Claim~\ref{claim:twiceA}};\\
    &\le \frac{1}{2(\alpha^5 - \alpha^3)} \card{\cA(H-v_0, \cycle \minusB v_0)}.
\end{align*}

Likewise, for every even cycle $C = (u_0, u_1, \dots, u_{2\ell-1}) \in \twiceB_{2\ell}$, we have $\card{\cF_C} \le \frac{1}{\tau^{2\ell - 2}} \card{\cA(H, \cycle)}$ (here we apply \eqref{eq:IH1} $2\ell-2$ times as $v_0 \notin C$, by definition). Using Claim~\ref{fact:cycles} together with the inequality $\card{\cA(H,\cycle)} \le K\card{\cA(H-v_0, \cycle \minusA v_0)}$, we obtain $\card{\cF_C} \le \frac{K}{\tau^{2\ell - 2}}\card{\cA(H-v_0, \cycle \minusB v_0)}$. One can check that $K\le 4\Delta$ for all values of $\Delta \ge 3$. Summing over all even cycles in $\twiceB$, we obtain
\begin{align*}
    \sum\limits_{C \in \twiceB} \card{\cF_C} &\le \sum\limits_{\ell \ge 4}  \sum\limits_{C \in \twiceB_{2\ell}} \frac{4\Delta}{\tau^{2\ell - 2}}\card{\cA(H-v_0,\cycle \minusB v_0)}\\
    &\le \sum\limits_{\ell \ge 4} \frac{\ell-4}{2} \Delta^{2\ell - 4} \frac{4\Delta}{\tau^{2\ell - 2}}\card{\cA(H-v_0,\cycle \minusB v_0)}, &\textrm{by Claim~\ref{claim:twiceB}};\\
    &\le \frac{2}{\Delta}\pth{\frac{1}{\alpha^3 - \alpha}}^2 \card{\cA(H-v_0, \cycle \minusB v_0)}, &\textrm{using }\sum\limits_{i \ge 0}(i+1)x^i = \pth{\sum\limits_{i\ge 0} x^i}^2.
\end{align*}

\noindent
We therefore conclude the proof of Inequality (ii):
\begin{align*}
\card{\cF} &\le \sum\limits_{C \in \twiceA} \card{\cF_C} + \sum\limits_{C \in \twiceB} \card{\cF_C} \le \sigma \card{\cA(H-v_0, \cycle \minusB v_0)}. \qedhere
\end{align*}

\end{subproof}

\noindent
Using Inequalities (i) and (ii), we have $\card{\cA(H, \cycle)} \ge \tau \card{\cA(H-v_0, \cycle \minusB v_0)}$, which ends the proof of the induction.

An iterative application of \eqref{eq:IH1} to all $v_0\in V(G)$ implies that $\card{\cA(G, \cycles_G)} \ge \tau^{\card{V(G)}}$, therefore an acyclic $K$-colouring of $G$ exists.
\end{proof}

\section{Concluding remarks and open problems}
\label{sec:conclusion}
\subsection{Graphs with limited overlaps in neighbourhoods}

Given a graph $G$ and an integer $r\ge 1$, let $\Delta^{(r)}(G)$ denote the maximum $s$ such that $K_{r,s} \subseteq G$. In other words, $\Delta^{(r)}(G)$ is the size of the largest common neighbourhood of $r$ distinct vertices. In particular, $\Delta^{(1)}(G)$ is precisely the maximum degree of $G$, and $\coDelta(G)$ is the maximum codegree of $G$.

We recall that for fixed $t$, we have $a(d,K_{2,t}) = \bigO{\sqrt{t} \cdot d}$ by Corollary~\ref{cor:1acyclic}. This was, in fact, shown in the works of Alon \emph{et al.}~\cite{AMR91} without the assumption that $t$ is fixed.

\begin{thm}[Alon et al, 1991]\label{thm:codeg-upper}
    Let $G$ be a graph. Then there exists a constant $\beta$ such that $$\chi_a(G) \le \beta \cdot \Delta(G) \cdot \sqrt{\coDelta(G)}.$$
\end{thm}

In Section~\ref{sec:general}, we considered the obstruction $K_{3,t}$ for a fixed $t$ and showed that $a(d,K_{3,t}) = \bigO{t^{1/4} \cdot d^{5/4}}$ (Corollary~\ref{cor:2acyclic}). Our proof did not rely on the assumption that $t$ is fixed, as we used explicit upper bounds throughout. Therefore, a slightly stronger result actually holds.

\begin{thm}\label{thm:cocodeg-upper}
    Let $G$ be a graph. Then there exists a constant $\beta$ such that $$\chi_a(G) \le \beta \cdot \Delta(G)^{5/4} \cdot \sqrt[4]{\cocoDelta(G)}.$$
\end{thm}

Our analysis of random graphs in Section~\ref{sec:lowerbounds} yields lower bounds confirming that Theorems~\ref{thm:codeg-upper} and~\ref{thm:cocodeg-upper} are tight up to a $\polylog(\Delta)$ factor, provided that $\coDelta(G)$ and $\cocoDelta(G)$ increase reasonably fast in terms of $\Delta$.

\begin{thm}\label{thm:codeg-lower}
    Let $0 < \alpha \le \frac{2}{3}$. For infinitely many values of $\Delta$, there exists a graph of maximum degree $\Delta$ and maximum codegree $\coDelta(G) = \bigTheta{\Delta^{\alpha}}$ such that $$\chi_a(G) = \bigOmega{\frac{\Delta\cdot \sqrt{\coDelta(G)}}{\sqrt{\log \Delta}}}.$$
\end{thm}

\begin{proof}
    Let $n$ be a (large) integer, and set $p\coloneqq n^{-\frac{1-\alpha}{2-\alpha}}$, thus $\sqrt{\frac{\log n}{n}}\ll p \ll \pth{\frac{\log n}{n}}^{1/4}$. Sample $G \gets G(n,n,p)$. With high probability, the following hold simultaneously:
    \begin{enumerate}[label=(\roman*)]
        \item $\chi_a(G) = \bigOmega{n^{3/2}p^2/\sqrt{\log n}}$ (Corollary~\ref{cor:reformulation}).
        \item $\Delta(G) = \bigTheta{np}$ (Lemma~\ref{lem:randomgraph-degrees}).
        \item $\coDelta(G) =\bigTheta{np^2}$ (similar proof to that of Lemma~\ref{lem:randomgraph-degrees}).
    \end{enumerate}
    Let us assume that (i), (ii), and (iii) hold. We have chosen $p$ such that $np^2 = (np)^\alpha$, therefore $$\coDelta(G) = \bigTheta{\Delta(G)^{\alpha}}.$$
    Furthermore, we observe that $n^{3/2}p^2 = (np)\cdot \sqrt{np^2} = \bigTheta{\Delta(G) \cdot \sqrt{\coDelta(G)}}$. Also, since $\sqrt{n\log n} \ll np < n$, we have $\log(n) = \bigTheta{\log \Delta(G)}$. Therefore,
    $$\chi_a(G) = \bigOmega{\frac{\Delta(G) \cdot \sqrt{\coDelta(G)}}{\sqrt{\log \Delta}}}.$$
\end{proof}

\begin{thm}\label{thm:cocodeg-lower}
    Let $0 < \alpha \le \frac{1}{3}$. For infinitely many values of $\Delta$, there exists a graph of maximum degree $\Delta$ and $\cocoDelta(G) = \bigTheta{\Delta^{\alpha}}$ such that $$\chi_a(G) = \bigOmega{\frac{\Delta^{5/4}\cdot \sqrt[4]{\cocoDelta(G)}}{\sqrt{\log \Delta}}}.$$
\end{thm}

\begin{proof}
    The proof is similar to that of Theorem~\ref{thm:codeg-lower}. Set $p \coloneqq n^{-\frac{1-\alpha}{3-\alpha}}$ and sample $G\gets G(n,n,p)$. With high probability, we have simultaneously $\chi_a(G) = \bigOmega{n^{3/2}p^2/\sqrt{\log n}}$, $\Delta(G) = \bigTheta{np}$, and $\cocoDelta(G) = \bigTheta{np^3} = \bigTheta{\Delta(G)^\alpha}$. Thus
    \[\chi_a(G) = \bigOmega{\Delta(G)^{5/4}\cdot\sqrt[4]{\cocoDelta(G)}/\sqrt{\log \Delta(G)}}.\]
\end{proof}

\subsection{Extended analysis of random graphs}

In Section~\ref{sec:lowerbounds}, we only provided a lower bound on the acyclic chromatic number of a random bipartite graph $G \gets G(n,n,p)$ for $p$ within a certain range, as that was sufficient for our needs. Here we extend our analysis to the whole range of possible values for $p$.
%
Using results from our work and the literature, we completely determine the acyclic chromatic number of $G\gets G(n,p)$ and $G\gets G(n,n,p)$ up to a $\polylog(n)$ factor.

\begin{thm}\label{thm:randomgraph-generalbounds}
    Let $n$ be a (large) integer, and $p = p(n) \in (0,1)$. Sample $G \gets G(n,p)$ or $G\gets G(n,n,p)$. With high probability,
    \begin{equation*}
\chi_a(G) = \begin{cases}
\softTheta{np + 1} &\text{if }p \le 9\sqrt{\frac{\log n}{n}};\\
\softTheta{n^{3/2}p^2} &\text{if }9\sqrt{\frac{\log n}{n}} \le p \le 3\pth{\frac{\log n}{n}}^{1/4};\\
\bigTheta{n} &\text{if }p\ge 3\pth{\frac{\log n}{n}}^{1/4}.
\end{cases}
\end{equation*}
\end{thm}
\begin{proof}
    For the first case, we use the lower bound $\chi_a(G) > \frac{ad(G)}{2} + 1$ given by Fertin \emph{et al.}~\cite{fertin2003acyclic} together with the fact that $\ad(G)=\softTheta{np + 1}$ w.h.p. to obtain that w.h.p. $\chi_a(G) = \softOmega{np + 1}$. Since $p = \bigO{ \sqrt{\log(n)/n}}$, we have $\coDelta(G) = \bigO{\log n}$ w.h.p., which together with Theorem~\ref{thm:codeg-upper} implies that w.h.p. $\chi_a(G) = \softO{np + 1}$.
    
    For the second case, Corollary~\ref{cor:reformulation} states that w.h.p. $\chi_a(G) = \bigOmega{n^{3/2}p^2/ \sqrt{\,\log n}}$. We apply Theorem~\ref{thm:codeg-upper} to obtain that w.h.p. $\chi_a(G) = \bigO{\Delta(G) \sqrt{\coDelta(G)}} = \bigTheta{np \sqrt{np^2}}$.

    Finally, the third case can be proven by slightly modifying the argument in~\cite{AMR91}, and observing that $n$ colours always suffice for an acyclic colouring of $G$.
\end{proof}
In comparison, for all $p = p(n) \in (0,1 - \eps)$, w.h.p. $\chi(G(n,p)) = \bigTheta{\frac{np}{\log (np)} + 1}$ (see e.g.~\cite{shamir1987sharp}).

\subsection{Open problems}

Our work illustrates the rich extremal behaviour of the acyclic chromatic number of $\F$-free graphs. While we were able to entirely characterise families $\F$ that force a sublinear bound on $a(d,\F)$ --- those that contain at least one forest ---, we do not yet have a complete understanding of that regime. Many other questions remain open for the other possible regimes. 


In order to bound the acyclic chromatic number of graphs excluding a tree as a subgraph (Corollary~\ref{cor:forest}), we have proven a more general result concerning the acyclic chromatic number of graphs with bounded degeneracy (Theorem~\ref{thm:degenerate}). We do not have a better lower bound than $\sqrt{\frac{\Delta+1}{2}}$, which results from the fact that $\chi_a(K_n^{1/2}) > \sqrt{n/2}$ (cf.~\cite{wood2005acyclic}).

\begin{question}
    What is the maximum value of $\chi_a(G)$ over all $t$-degenerate graphs $G$ of maximum degree $\Delta$?
\end{question}

Motivated by the bound $a(d,K_{2,t}) = \bigO{\sqrt{t} \cdot d}$ of Alon \emph{et al.}~\cite{AMR91}, we sought to exhibit a large family of graphs $F$ such that $a(d,F)$ is linear in $d$. Our description is certainly incomplete; it would be very interesting to present an even larger family.

\begin{question}
    For which graphs $F$ does it hold that $a(d,F) = \bigO[F]{d}$?
\end{question}

In Section~\ref{sec:lowerbounds}, we have proven that for a fixed $t$, the upper bound $a(d,K_{3,t}) = \bigO{t^{1/4} d^{5/4}}$ is tight up to a factor $d^{\bigO{1/t}}$ (Corollary~\ref{cor:K_3t-free}). This is not satisfying for small values of $t$; for instance the case $t=3$ remains largely open, as we do not have better bounds than $\frac{d}{2} + 1 < a(d,K_{3,3}) = \bigO{d^{5/4}}$.

\begin{question}
    What is the order of magnitude of $a(d,K_{3,3})$? 
\end{question}

In Section~\ref{sec:cycle}, we focused specifically on even cycle obstructions and proved a $2\Delta + o(\Delta)$ upper bound holds. We then showed that this constant $2$ can be replaced by $1.7633$ for graphs of girth $7$. Naturally, one may ask how small this constant may be as we increase the girth. Note that this constant cannot be smaller than $\frac{1}{2}$, as illustrated by the proof of Proposition~\ref{prop:linearlowerbound:cycle}.

\begin{question}
    For $\ell \ge 4$, let $\mathscr{C}_\ell \coloneqq \{C_3, C_4, \dots, C_\ell\}$. In other words, a graph is $\mathscr{C}_\ell$-free if it has girth $\ell + 1$. Is it true that $a(d,\mathscr{C}_\ell) \le (1 + \varepsilon_\ell)d$, with $\varepsilon_\ell \rightarrow 0$ as $\ell \rightarrow \infty$?
\end{question}

Finally, we address a more general question inspired by a similar conjecture of Erd\H{o}s and Simonovits, which states that for every rational $\alpha \in [1,2]$, there exists a graph $F_\alpha$ such that $\ex(n,F_\alpha) = \bigTheta{n^\alpha}$.
For every single-graph obstruction $F$, we define $\alpha_F \coloneqq \lim_{d\to \infty} \limits \frac{\log a(d,F)}{\log d}$. We have seen that there are infinitely many possible values in the range $[1,4/3]$ for $\alpha_F$. For instance, $\alpha_{\hammock{K^{1/2}_{s,t}}} \underset{t\to\infty}{\to} \frac{5}{4} - \frac{1}{4s}$ by Corollary~\ref{cor:subdivision-upperbound} and Corollary~\ref{cor:subdivision-lowerbound}. We may ask whether one can cover the whole range $[1,4/3]$ with the values of $\alpha_F$.

\begin{question}
\label{q:exponent}
    For every rational $\alpha \in [1,4/3],$ is there a graph $F$ such that $\alpha_F=\alpha$?
\end{question}

\bibliographystyle{abbrv}
\bibliography{acyclic}

\end{document}